\renewcommand{\phi}{\varphi}
\renewcommand{\epsilon}{\varepsilon}
\renewcommand{\geq}{\geqslant}
\renewcommand{\leq}{\leqslant}
\renewcommand{\P}{\mathbb{P}}
\renewcommand{\Im}{\operatorname{Im}}
\renewcommand{\Re}{\operatorname{Re}}
\newcommand{\R}{\mathbb{R}}
\newcommand{\C}{\mathbb{C}}
\newcommand{\T}{\mathbb{T}}
\newcommand{\x}{\boldsymbol{x}}
\newcommand{\y}{\boldsymbol{y}}
\newcommand{\vv}{\boldsymbol{v}}
\newcommand{\oomega}{\boldsymbol{\omega}}
\newcommand{\supp}{\operatorname{supp}}
\newcommand{\dist}{\operatorname{dist}}
\newcommand{\nb}{\mathcal{N}} 
\newcommand{\Q}{\mathcal{Q}}
\newcommand{\q}{\mathcal{q}}
\newcommand{\Span}{\operatorname{span}}
\newtheorem{theorem}{Theorem}[section]
\newtheorem{definition}[theorem]{Definition}
\newtheorem{proposition}[theorem]{Proposition}
\newtheorem{lemma}[theorem]{Lemma}
\newtheorem{remark}[theorem]{Remark}
\newtheorem{corollary}[theorem]{Corollary}
\newtheorem{question}[theorem]{Question}
\newtheorem{fact}[theorem]{Fact}
\newtheorem{claim}[theorem]{Claim}
\DeclareFontFamily{U}{mathx}{\hyphenchar\font45}
\DeclareFontShape{U}{mathx}{m}{n}{
      <5> <6> <7> <8> <9> <10>
      <10.95> <12> <14.4> <17.28> <20.74> <24.88>
      mathx10
      }{}
\DeclareSymbolFont{mathx}{U}{mathx}{m}{n}
\DeclareMathAccent{\widecheck}{0}{mathx}{"71}
\newcommand{\dbtilde}[1]{\accentset{\approx}{#1}}
\newcommand{\sbullet}{%
  \hbox{\fontfamily{lmr}\fontsize{.4\dimexpr(\f@size pt)}{0}\selectfont\textbullet}}
\DeclareRobustCommand{\mathbullet}{\accentset{\sbullet}}
\title{Incidence estimates for tubes in complex space}
\author{Sarah Tammen}
\address{Department of Mathematics\\
University of Wisconsin - Madison\\
Madison, WI 53706}
\email{tammen2@wisc.edu}
\author{Lingxian Zhang}
\address{Department of Mathematics and International Center for Mathematics, Southern
University of Science and Technology, Shenzhen 518055, China}
\email{l.zhang.work1@gmail.com}
\keywords{}
\subjclass[2020]{Primary: 42B99; Secondary: 51A99}
\date{}
\begin{document}

\begin{abstract}
In this paper, we prove a complex version of the incidence estimate of Guth, Solomon and Wang \cite{WellSpacedTubes} for tubes obeying certain strong spacing conditions, and we use one of our new estimates to resolve a discretized variant of Falconer's distance set problem in $\C^2$.
\end{abstract}

\maketitle

\section{Introduction}
It has long been known that in Fourier analysis and related fields, only some of the results true in the real space hold in the complex space as well. For instance, 
\begin{itemize}
\item the Kakeya problem in dimension three under the Wolff axioms is true over $\R$ \cite{Kakeya_3dim_WolffAxioms_Real_new} but false over $\C$ because of the Heisenberg example \cite{Kakeya_3dim_WolffAxioms_Old}; 
\item the Falconer distance problem and its cousins, the Erd\"os ring problem and the Furstenberg problem, have counterexamples over $\C$ because of the existence of the quadratic subfield $\R$ \cite{WolffRecentKakeya}, but are still open over $\R$; 
\item the Szemer\'edi-Trotter theorem is true over both $\R$ \cite{SzT_Original} and $\C$ \cite{SzT_Complex}; 
\item the Erd\H{o}s distinct distances problem, with an $\epsilon$-loss, is true over both $\R$ \cite{ErdosDistinctDist_Real} and $\C$ \cite{ErdosDistinctDist_Complex}.
\end{itemize}
The results in this paper are complex analogues of the incidence estimates and distance estimates obtained in \cite{WellSpacedTubes}, giving some more examples of problems that have positive answers over both $\R$ and $\C$.

\par By ``complex lines", we will mean the translations of $1$-dimensional vector subspaces of $\C^n$ (over the field $\C$), and ``complex tubes" will refer to small neighborhoods of complex line segments. By ``the angle between the tubes", we will mean the angle between the line segments generating the tubes. One intricacy of this paper is in defining the angle between any two complex directions. There are two common ways to do so, one via an explicit metric on $\C\P^{n-1}$, and another via a variational formula in $\R^{2n}$. The two turn out to be equivalent, and details are spelled out in \Cref{sect_def}. We will implicitly invoke the metric definition when discussing the partition of $\C\P^{n-1}$ into tiny parts. 

\par Some more terminology we will be using: A \emph{$\delta$-ball} is a ball of radius $\delta$, a \emph{$\delta$-tube} is the $\delta$-neighborhood of a unit-length line segment. We call a subset $\theta\subset\C\P^{n-1}$ an \emph{almost-$\delta$-cap} if there exists a point $\boldsymbol{u}\in\C\P^{n-1}$ such that \[B_{\C\P^{n-1}}\left(\boldsymbol{u},\tfrac{1}{2}\delta\right)\subseteq\theta\subseteq B_{\C\P^{n-1}}\left(\boldsymbol{u},2\delta\right).\]
We say two solids $A_1,A_2\subseteq\C^n\simeq\R^{2n}$ (e.g. balls, complex tubes, etc.) essentially intersect if their intersection is at least half as large as the maximum intersection between all rigid transformations of the objects, i.e. \[\left|A_1\cap A_2\right|\geq\tfrac{1}{2}\max_{\sigma_1,\sigma_2\in\operatorname{SE}(2n)}\left|\sigma_1\left(A_1\right)\cap\sigma_2\left(A_2\right)\right|.\]
Two solids of the same kind are said to be essentially distinct if they do not essentially intersect, or else they are said to be essentially the same. For example, two $\delta$-tubes $T_1$ and $T_2$ are essentially distinct if the volume of $T_1 \cap T_2$ is no more than half the volume of $T_1$. One solid $A_1$ is said to essentially contain another solid $A_2$ if $A_1$ contains at least half of $A_2$, i.e. \[|A_2\setminus A_1|<\tfrac{1}{2}|A_2|.\]

\par Following common conventions, we will write $X\lesssim_{\alpha} Y$ to mean that there exists a positive constant $C_{\alpha,n}$ depending only on the parameter $\alpha$ and the dimension $n$ such that $X\leq C_{\alpha,n} Y$. The dependency on the dimension $n$ is often suppressed in the notation. We will write $X\sim_{\alpha} Y$ to mean that $X\lesssim_{\alpha} Y\lesssim_{\alpha} X$. Plus, we will write $X(\delta)\ll Y(\delta)$ if $\lim_{\delta\to 0}\frac{X(\delta)}{Y(\delta)}=0$. 

\par Given a collection $\T$ of solids of the same kind (e.g. complex tubes), we denote by $P_r(\T)$ a maximal\footnote{That is to say, we first consider the collection of all the $\delta$-balls that are essentially contained in the intersection of at least $r$ of the solids $T\in\T$, i.e. $r$-rich, and then take a subcollection $P_r(\T)$ consisting of essentially distinct $\delta$-balls such that every other $r$-rich $\delta$-ball in the super-collection is essentially the same as one of the $\delta$-balls in the subcollection. There can be a little ambiguity as to what exactly this subcollection $P_r(\T)$ is; but since all the maximal subcollections have about the same size, we just fix any one of them.} collection of essentially distinct $r$-rich $\delta$-balls.
\par Now we are ready to describe our main theorems. The precise spacing condition will be stated later when we prove the theorems. (See the statements of \Cref{thm:1st_main}, \Cref{thm:2nd_main} and \Cref{thm:diff_set_size}, which corresponds to \Cref{thm:intro_1st_main}, \Cref{thm:intro_2nd_main} and \Cref{cor:intro_diff_set_size}, respectively.) 

\begin{theorem}\label{thm:intro_1st_main}
Let $\Theta$ be a partition of $\C\P^{n-1}$ into almost-$\delta$-caps. Suppose $1\leq W\leq\delta^{-1}$ and $1\leq N\leq W^{-1}\delta^{-1}$. For each $\theta\in\Theta$, let $\left\{T_{\theta,j}\right\}_{1\leq j\lesssim NW^{2(n-1)}}$ be a family of essentially distinct complex $\delta$-tubes contained in the unit ball $B_{\C^n}(\boldsymbol{0},1)$ with direction in~$\theta$ such that every complex $W^{-1}$-tube contained in the unit ball with direction in~$\theta$ essentially contains~$N$ of the complex tubes~$T_{\theta,j}$. Let~$\T$ denote the set of all the tubes~$T_{\theta,j}$. Then, for richness $r\gtrsim_{\epsilon}\delta^{2(n-1)-\epsilon}|\T|$, \[|P_r(\T)|\lesssim_{\epsilon}\delta^{-\epsilon}W^{-2(n-1)}r^{-2}|\T|^2.\]
\end{theorem}

\begin{theorem}\label{thm:intro_2nd_main}
Let $n=2$~or~$3$, $1\leq W\leq\delta^{-1}$, and $\T$ be a collection of complex $\delta$-tubes essentially contained in $B_{\C^n}(\boldsymbol{0},1)$. Suppose each complex $W^{-1}$-tube essentially contained in $B_{\C^n}(\boldsymbol{0},1)$ contains $N_0$ complex $\delta$-tube $T\in\T$. Then, for $r\geq\max\{ \delta^{2(n-1)-\epsilon}|\T|,1+N_0\}$, \[|P_r(\T)|\lesssim_{\epsilon,n,N} \delta^{-\epsilon}|\T|^{\frac{n}{n-1}}r^{-\frac{n+1}{n-1}}.\]
\end{theorem}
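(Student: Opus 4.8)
The plan is to bootstrap the preceding well-spaced estimate by an induction on scales. First I would make the standard reductions: pass to a $\delta$-separated subset of $P_r(\T)$ inside $B_{\C^n}(\boldsymbol 0,1)$, take $\T$ essentially distinct, and dyadically pigeonhole so that at every scale the tubes are as uniformly distributed as possible among the caps of $\C\P^{n-1}$ and spatially; in particular we may then assume each complex $W^{-1}$-tube carries $\sim C_0$ tubes of $\T$, so that $|\T|\sim C_0W^{4(n-1)}$. One inducts on $\log(1/\delta)$, the case $\delta\sim 1$ being trivial, after fixing an intermediate scale $\rho$ with $W^{-1}\leq\rho<1$ (a small fixed power of $1/\delta$; the degenerate ranges of $W$ and $C_0$ in which no such useful $\rho$ exists make the statement essentially immediate from $r\geq 1+C_0$).

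The two-scale step has a fine and a coarse part. \emph{Fine part:} cover $B_{\C^n}(\boldsymbol 0,1)$ by boundedly overlapping $\rho$-balls $Q$; truncating the tubes through $Q$ to $\sim\rho$-length pieces near $Q$ and rescaling by $\rho^{-1}$ yields a family $\T_Q$ of complex $(\delta/\rho)$-tubes in $B_{\C^n}(\boldsymbol 0,1)$ in which each complex $(W\rho)^{-1}$-tube contains $\lesssim C_0$ tubes --- here $(W\rho)^{-1}\in[\delta/\rho,1]$ since $W^{-1}\leq\rho$ and $W\leq\delta^{-1}$ --- so after one more pigeonholing the covering hypothesis holds for $\T_Q$ with some $C_0'\leq C_0$. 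If $k_Q$ is the largest number of tubes of $\T$ through a common point of $Q$ that become indistinguishable in $\T_Q$, a point of $P_r(\T)\cap Q$ is $\gtrsim r/k_Q$-rich for $\T_Q$, and the inductive hypothesis --- whose $r$-range is implied by $r\geq\delta^{2(n-1)-\epsilon}|\T|$ and $r\geq 1+C_0$, using $k_Q|\T_Q|\lesssim|\T|\rho^{2(n-1)}$ and $C_0'<r$ --- gives
\[
|P_r(\T)\cap Q|\ \lesssim_{\epsilon,n,C_0}\ (\delta/\rho)^{-\epsilon}\,|\T_Q|^{\frac{n}{n-1}}\,(r/k_Q)^{-\frac{n+1}{n-1}}.
\]
\emph{Coarse part:} replace each $T\in\T$ by the complex $\rho$-tube $\widehat T$ containing it; by the covering hypothesis and the pigeonholing, the family $\widehat\T$ of these $\rho$-tubes satisfies (after a rescaling and after being split into $O_{C_0}(1)$ subfamilies of the rigid shape it demands) the hypotheses of the preceding theorem, and any point that is $r$-rich for $\T$ is, up to the local collapsing multiplicity, rich for $\widehat\T$. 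The preceding theorem then bounds the number of such $\widehat\T$-rich points, hence the number of $\rho$-balls $Q$ carrying many points of $P_r(\T)$, by $\delta^{-\epsilon}$ times an explicit product of powers of $W$, $\rho$, $r$, and $|\widehat\T|\sim|\T|/(\text{tubes per }\rho\text{-tube})$. Inserting this into $\sum_Q|P_r(\T)\cap Q|$ --- after pigeonholing $|\T_Q|$, $k_Q$, and $|P_r(\T)\cap Q|$ to be essentially constant and using $|\T|\sim C_0W^{4(n-1)}$ --- the powers of $W$ and $\rho$ are arranged to cancel, leaving
\[
|P_r(\T)|\ \lesssim_{\epsilon,n,C_0}\ \big(\log\tfrac1\delta\big)^{O(1)}(\delta/\rho)^{-\epsilon}\,|\T|^{\frac{n}{n-1}}\,r^{-\frac{n+1}{n-1}}.
\]
Only $O(\log\log\tfrac1\delta)$ scales occur before $\delta\sim 1$, so the accumulated logarithmic and $(\delta/\rho)^{-\epsilon}$ factors together contribute at most $\delta^{-O(\epsilon)}$, and relabelling $\epsilon$ closes the induction.

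I expect the main obstacle to lie in the bookkeeping of the coarse part: one must arrange that the power of $W$ output by the well-spaced estimate, the power of $W$ in $|\T|\sim C_0W^{4(n-1)}$, the number $\sim\rho^{-2n}$ of $\rho$-balls, and the collapsing multiplicities $k_Q$ combine so that the self-improving exponents $\tfrac{n}{n-1}$ and $\tfrac{n+1}{n-1}$ reproduce themselves exactly. This is sensitive to the precise constants in both hypotheses, in particular to the two thresholds for $r$ --- the role of $r\geq 1+C_0$ being to force rich points to genuinely involve several distinct $W^{-1}$-tubes, keeping the rescaled subproblems non-degenerate --- and the cases $n=2$ and $n=3$ need to be checked separately. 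A related difficulty is to verify, at each application, that the fattened or rescaled tube families really can be pigeonholed into the rigid configurations the preceding theorem requires: a partition of $\C\P^{n-1}$ into almost caps carrying a prescribed number of essentially distinct tubes, with a prescribed local covering.
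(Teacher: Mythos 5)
Your approach is genuinely different from the paper's, and I think there is a real gap in it. The paper does \emph{not} bootstrap Theorem~\ref{thm:1st_main}: its proof of Theorem~\ref{thm:2nd_main} proceeds by induction on scale using Theorem~\ref{thm:2nd_main} itself together with \Cref{lem:heavy_ball_lemma} (the Fourier-analytic thin/thick dichotomy), applied to a decomposition of $B_{\C^n}(\boldsymbol 0,1)$ into $D\delta$-balls with $D=\delta^{-\epsilon^4}$ and chopped tubes, preceded by a separate and nontrivial two-case analysis for $r<\delta^{-\epsilon^3}$ keyed to the maximal angle $\alpha$ through a rich ball. Your plan replaces this by a coarse/fine two-scale split at a scale $\rho\geq W^{-1}$ and invokes the well-spaced estimate on the fattened family $\widehat\T$. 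That substitution is the crux, and it is where I see the problem.

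The concrete difficulty is that $\widehat\T$ does not obviously meet the hypotheses of Theorem~\ref{thm:1st_main}. If every $W^{-1}$-tube holds $C_0\geq1$ members of $\T$ and $\rho\geq W^{-1}$, then \emph{every} $\rho$-tube in the unit ball is a fattening of some $T\in\T$, so $\widehat\T$ is essentially the full family of $\rho$-tubes and the well-spaced theorem degenerates to a near-trivial statement with $N\sim1$; the exponent $-2$ in $r^{-2}$ that Theorem~\ref{thm:1st_main} supplies then has no mechanism to reproduce the target $r^{-\frac{n+1}{n-1}}$ for $n=3$, and you give no computation showing that the collapsing multiplicities $k_Q$ and the count of $\rho$-balls recombine to the exponents $\frac{n}{n-1}$ and $\frac{n+1}{n-1}$. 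You flag this yourself as ``the main obstacle'' but leave it unresolved; that is exactly the place the argument must be carried through, and the paper sidesteps it entirely by going straight to \Cref{lem:heavy_ball_lemma} and by a dyadic pigeonhole in the angle parameter $\alpha$ (with separate treatment of $\alpha$ small and $\alpha$ large) that your outline does not have any analogue of. You also do not engage with why the restriction $n\in\{2,3\}$ is needed: in the paper this enters precisely through the sign of $-2+\frac{n+1}{n-1}$ in estimating $E_0^{-2+\frac{n+1}{n-1}}$, a numerical fact that only surfaces once the exponent bookkeeping is actually done. Finally, the ``only $O(\log\log(1/\delta))$ scales'' accounting of the accumulated $(\delta/\rho)^{-\epsilon}$ and logarithmic losses is asserted without justification, whereas the paper absorbs its losses into a fixed polynomial $\delta^{\text{(positive power of }\epsilon)}$ gain per inductive step. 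In short, the proposal sketches a plausible high-level template, but the step that would make it a proof --- matching $\widehat\T$ to the rigid structure Theorem~\ref{thm:1st_main} demands and verifying that the resulting exponents close the induction --- is missing, and it is not clear it can be supplied as stated.
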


\begin{corollary}\label{cor:intro_diff_set_size}
Fix $2<s<4$. Let $E$ be a set of $\sim\delta^{-s}$ many $\delta$-balls in $B_{\C^2}(\boldsymbol{0},1)$ with at most $N$ $\delta$-balls in each ball of radius $\delta^{s/4}$. Then, the number of disjoint $\delta$-balls needed to cover the difference set \[\Delta(E):=\left\{(x_1-x_2)^2+(y_1-y_2)^2\in\C:\Vec{p}_1=(x_1,y_1),\Vec{p}_2=(x_2,y_2)\in\bigcup E\right\}\]
is $\gtrsim_{\epsilon,s,N}\delta^{-s+\epsilon}$ for all $\epsilon>0$. 
\end{corollary}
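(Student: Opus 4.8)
\emph{Plan.} The strategy is to reduce the lower bound, through a standard double application of Cauchy--Schwarz (a ``pinned distances / isosceles triples'' argument), to a point--tube incidence estimate supplied by \textbf{Theorem 1.1} with $n=2$. Write $Q(z):=z_1^2+z_2^2$ for $z=(z_1,z_2)\in\C^2$, so that $\Delta(E)$ is the set of values $Q(p_1-p_2)$, and let $B(z,w):=z_1w_1+z_2w_2$ be the associated symmetric $\C$-bilinear form, so $Q(z)=B(z,z)$. Set $N:=|E|\sim\delta^{-s}$ (the number of $\delta$-balls) and let $K$ be the $\delta$-covering number of $\Delta(E)$, the quantity to be bounded from below.

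First I would fix a point $p$ in a $\delta$-net of $\bigcup E$, partition the remaining points according to the value of $Q(p-\,\cdot\,)$ into at most $|\Delta_p(E)|_\delta\le K$ groups, and apply Cauchy--Schwarz; summing the resulting inequality over $p$, together with $\sum_p|\Delta_p(E)|_\delta\le NK$ and a second Cauchy--Schwarz, gives
\[
T:=\#\bigl\{(p,p',p'')\in(\textstyle\bigcup E)^3:\ |Q(p-p')-Q(p-p'')|\lesssim\delta\bigr\}\ \gtrsim\ N^3/K .
\]
Hence it suffices to prove $T\lesssim_{\epsilon,s}\delta^{-\epsilon}N^2$; this gives $K\gtrsim N^3/(\delta^{-\epsilon}N^2)\sim\delta^{-s+\epsilon}$, as claimed.

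To bound $T$ I would use the algebraic identity $Q(a)-Q(b)=B(a-b,a+b)$: for $p'\neq p''$ the ``bisector'' locus $\{p:Q(p-p')=Q(p-p'')\}$ is the complex affine line $\{p:\ B(p''-p',p)=\tfrac12 B(p''-p',p'+p'')\}$. After the routine dyadic pigeonholing that restricts to pairs with non-degenerate bisector geometry (say $|p''-p'|\sim1$ and $Q(p''-p')$ bounded away from $0$; the remaining pairs are treated separately, see below), $T$ is dominated by $\sum_{(p',p'')}\#\bigl(\bigcup E\cap H_{p',p''}\bigr)$, where $H_{p',p''}$ is the complex $\delta$-tube around that bisector line. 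So $T$ is governed by the point--tube incidences between the $\delta$-balls of $\bigcup E$ and the multifamily $\T:=\{H_{p',p''}\}$, for which $|\T|\le N^2$. Writing this incidence count as $\int_0^\infty I_r\,dr$, where $I_r$ is the number of $\delta$-balls of $\bigcup E$ contained in at least $r$ tubes of $\T$ (so $I_r\le\min(|P_r(\T)|,N)$), it remains to apply \textbf{Theorem 1.1} to $\T$. Taking $W:=\delta^{-s/4}$, I would verify its hypothesis: if a complex $W^{-1}$-tube $\tilde T$ essentially contains $H_{p',p''}$, then the direction constraint confines $[p''-p']\in\C\P^1$ to a $W^{-1}$-cap and the position constraint confines $p''$ to a $W^{-1}$-ball about the $B$-reflection of $p'$ in $\tilde T$; since $W^{-1}=\delta^{s/4}$, the non-concentration hypothesis (at most $C$ $\delta$-balls of $E$ per ball of radius $\delta^{s/4}$) bounds the number of admissible $p''$ by $C$, so $\tilde T$ contains at most $C_0:=CN$ tubes of $\T$, counted with multiplicity. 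Since $s<2$, one has $\delta^{2-\epsilon}|\T|\le\delta^{2-2s-\epsilon}<\delta^{-s}\sim C_0$ (for $\epsilon$ small), so the threshold in \textbf{Theorem 1.1} is $r_{\min}\sim N$, and for $r\ge r_{\min}$ it gives $|P_r(\T)|\lesssim_\epsilon\delta^{-\epsilon}|\T|^2r^{-3}$. Splitting $\int_0^\infty I_r\,dr$ at $r_{\min}$, using $I_r\le N$ below it and $I_r\le|P_r(\T)|$ above it, yields $T\lesssim N r_{\min}+\delta^{-\epsilon}|\T|^2 r_{\min}^{-2}\lesssim_{\epsilon,C}\delta^{-\epsilon}N^2$.

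The main obstacle is the degenerate (null) locus of $Q$: when $Q(p''-p')=0$, the vector $p''-p'$ is $B$-isotropic, the bisector line is parallel to and passes through the line joining $p'$ and $p''$, and the reflection above degenerates, so the ``null'' and nearly-null pairs discarded in the pigeonholing must be shown to be a harmless perturbation. For this I would observe that the bisector of a null pair is a complex $\delta$-tube through $p'$ and $p''$ in a null direction, and that such a tube of length $1$ is covered by $\lesssim\delta^{-s/2}$ balls of radius $\delta^{s/4}$; hence the non-concentration hypothesis bounds by $\lesssim\delta^{-s/2}$ both the number of null partners of a fixed $p'$ and the quantity $\#(\bigcup E\cap H_{p',p''})$ for a null pair, so the total null contribution to $T$ is $\lesssim N\cdot\delta^{-s/2}\cdot\delta^{-s/2}\sim N^2$, which is acceptable. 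Making the reflection description uniform for nearly-isotropic $\tilde T$, and handling the bookkeeping when $|p''-p'|$ ranges over dyadic scales below $1$ (so that $H_{p',p''}$ has radius a dyadic multiple of $\delta$ and \textbf{Theorem 1.1} is invoked at that radius), are the remaining technical points, but they are routine once the null case is understood.
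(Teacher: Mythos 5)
Your route is genuinely different from the paper's: you stay in $\C^2$, reduce by a double Cauchy--Schwarz to the isosceles-triple count $T\gtrsim N^3/K$, and then try to bound $T$ as a point--tube incidence count where the tubes are $\delta$-neighborhoods of the complex $B$-bisector lines $H_{p',p''}\subset\C^2$, invoking the incidence estimate at $n=2$ (you cite Theorem~1.1, but the exponents $|\T|^2 r^{-3}$ you use are those of \Cref{thm:2nd_main} at $n=2$). The paper instead \emph{lifts to $\C^3$}: it attaches to each ordered pair of $\delta$-balls the complex line $l_{\vec p_1,\vec p_2}\subset\C^3$, injects the additive energy (a quadruple count $|Q|$) into tube--tube intersections, and applies \Cref{thm:2nd_main} with $n=3$. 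That lift is load-bearing, not cosmetic: in $\C^3$ distinct pairs always give distinct tubes, whereas in $\C^2$ the map $(p',p'')\mapsto H_{p',p''}$ is far from injective.

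This is where your argument has a genuine gap. The multiplicity $m(L)$ of a bisector line $L$ equals the number of $p'\in\bigcup E$ whose $B$-reflection across $L$ also lies in $\bigcup E$; nothing in the non-concentration hypothesis prevents this from being $\sim N$ for many lines $L$. You flag the issue by writing ``counted with multiplicity'', but the quantity you need, $T=\sum_L m(L)\,\#\bigl(\bigcup E\cap H_L\bigr)$, is a \emph{weighted} incidence count, while \Cref{thm:2nd_main} bounds $|P_r(\T)|$ for a family of essentially distinct tubes; your step $I_r\le |P_r(\T)|$ is simply false once multiplicities appear. Collapsing to distinct tubes and pigeonholing dyadically in $m$ does not close either: the sub-family $\T'_m$ of multiplicity-$\sim m$ lines has $|\T'_m|\lesssim N^2/m$ and spacing constant $H_0\sim N/m$, so the incidence estimate yields $m\cdot I(E,\T'_m)\lesssim N^2+\delta^{-\epsilon}N^2 m$, which is $\delta^{-\epsilon}N^3$ when $m\sim N$. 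Even the crude bound $T\lesssim N^2\cdot\max_L\#(\bigcup E\cap H_L)\lesssim \delta^{-9s/4}$ (using that any unit line meets $\lesssim\delta^{-s/4}$ balls of $E$) exceeds the target $\delta^{-2s-\epsilon}$ for every $\epsilon<s/4$, so the shortfall is real and not just bookkeeping. One would need a weighted incidence theorem or a structural bound on the multiplicities, neither of which you supply; the paper's lift to $\C^3$ is precisely the device that makes the multiplicity problem vanish. Your spacing verification via a $B$-reflection and your treatment of the null/isotropic directions are reasonable and close in spirit to Proposition~\ref{prop:spacing_check}, but they do not repair the multiplicity issue.
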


\par In \Cref{sect_def}, we review some definitions and conventions that will be used in the rest of the paper. In \Cref{sect_lem}, we prove a lemma which allows us to capture the behavior of a large number of small balls using a few slightly larger balls, and thus lays the foundation for induction on scale. In \Cref{sect_thm}, we consider two strong spacing conditions on complex tubes, and prove an upper bound on the number of rich balls in each case. And lastly, in \Cref{sect_app}, we derive a lower bound on the number of distinct complex distances for certain sparse sets in $\C^2$ by reducing the problem to one about incidences between complex tubes.

\section{Definitions, notations and conventions}\label{sect_def}
As mentioned in the introduction, there are two equivalent\footnote{To see the equivalence, write $\boldsymbol{u}_1=(a_1+b_1i,c_1+d_1i)$ and $\boldsymbol{u}_2=(a_2+b_2i,c_2+d_2i)$, with $a_1,a_2,b_1,b_2,c_1,c_2,d_1,d_2\in\R$. Then, for any $z=e^{i\zeta}$ with $\zeta\in\R$, \begin{align*}
\iota(\boldsymbol{u}_1)\cdot\iota(z\boldsymbol{u}_2)=(a_1a_2+b_1b_2+c_1c_2+d_1d_2)\cos\zeta+(-a_1b_2+a_2b_1-c_1d_2+c_2d_1)\sin\zeta.
\end{align*}
Hence, \begin{align*}
&\min_{z_1, z_2 \in \C: |z_1|=|z_2| = 1} \arccos \left( \iota(z_1 \boldsymbol{u}_1) \cdot \iota(z_2 \boldsymbol{u}_2) \right)\\
&=\arccos \left(\max_{z\in\C:|z|=1}\iota(\boldsymbol{u}_1) \cdot \iota(z \boldsymbol{u}_2) \right)\\
&=\arccos\sqrt{(a_1a_2+b_1b_2+c_1c_2+d_1d_2)^2+(-a_1b_2+a_2b_1-c_1d_2+c_2d_1)^2}\\
&=\arccos| \langle \boldsymbol{u}_1, \boldsymbol{u}_2 \rangle|
\end{align*}} ways to define the angle between two complex lines, say \[\ell_1=\left\{z\boldsymbol{u}_1+\boldsymbol{t}_1\in\C^n:z\in\C\right\}\quad\text{and}\quad\ell_2=\left\{z\boldsymbol{u}_2+\boldsymbol{t}_2\in\C^n:z\in\C\right\},\]
with $\|\boldsymbol{u}_1\|=\|\boldsymbol{u}_2\|=1$: \begin{enumerate}
\item Treat the complex lines as $2$-planes in $\R^{2n}$, and define the angle between the complex lines to be the first principal angle between the planes:
\[
\min_{z_1, z_2 \in \C: |z_1|=|z_2| = 1} \arccos \left( \iota(z_1 \boldsymbol{u}_1) \cdot \iota(z_2 \boldsymbol{u}_2) \right),
\]
where $\iota(\boldsymbol{u})$ denotes the usual embedding of $\boldsymbol{u} \in \C^n$ into $\R^{2n}$;
\item Define the angle numerically to be $\arccos| \langle \boldsymbol{u}_1, \boldsymbol{u}_2 \rangle|$, where $\langle \cdot , \cdot \rangle$ is the usual inner product on $\C^n$; this is also the Fubini-Study distance, up to a scaling constant, between the equivalent classes of $\boldsymbol{u}_1$ and $\boldsymbol{u}_2$ in $\C\P^{n-1}$. 
\end{enumerate}
Note that for $2$-planes in $\R^{2n}$ corresponding to complex tubes, the two principal angles are equal\footnote{Let's assume WLOG that $\boldsymbol{t}_1=\boldsymbol{t}_2=\boldsymbol{0}$. Suppose the first principal angle is attained for the pair of unit vectors $\left(\boldsymbol{v}_1,\boldsymbol{v}_2\right)\in e^{i\R}\boldsymbol{u}_1\times e^{i\R}\boldsymbol{u}_2$. Since the orthogonal complement of $\iota\left(\boldsymbol{v}_j\right)$ in $\iota\left(\ell_j\right)$ is $\Span_{\R}\left(\iota\left(i\boldsymbol{v}_j\right)\right)$ for each $j\in\{1,2\}$, the second principal angle between $\iota\left(\ell_1\right)$ and $\iota\left(\ell_2\right)$ is \[\arccos\left(\iota\left(i\boldsymbol{v}_1\right)\cdot\iota\left(i\boldsymbol{v}_2\right)\right)=\arccos\left(\iota\left(\boldsymbol{v}_1\right)\cdot\iota\left(\boldsymbol{v}_2\right)\right). \]}. Hence the study of incidences between complex lines in $\C^n$ is different from the study of incidences between general $2$-planes in $\R^{2n}$.



\par The following proposition implies that two tubes with the same centre are essentially distinct only if the angle between them is $\gtrsim \delta$.  More generally, if tubes $T_1$ and $T_2$ intersect with 
\[
|T_1 \cap T_2| > \tfrac{1}{2} \max_{\boldsymbol{t} \in \C^n} \Big(|T_1 \cap(T_2 + \boldsymbol{t})| \Big),
\]
then $T_1$ and $T_2$ are essentially distinct only if the angle between $T_1$ and $T_2$ is $\gtrsim \delta$.

\begin{proposition}\label{prop:intersection-of-complex-tubes}
Consider $\delta$-neighborhoods of the lines
\[ \ell_1 = \{ z \boldsymbol{u}_1 : z \in \C \}\quad\text{and}\quad
\ell_2 = \{ z \boldsymbol{u}_2 : z \in \C \}\]
for some direction vectors $\boldsymbol{u}_1$ and $\boldsymbol{u}_2$ that form angle $\theta > 0$ to each other, according to the definition above. Let $\iota: \C^n \to \R^{2n}$ be the usual embedding. Then 
\begin{equation}
\label{intersection volume}
\left|\iota(N_{\delta}(\ell_1)) \cap \iota(N_{\delta}(\ell_2))\right| \sim \frac{\delta^{2n}}{\sin^2 \theta}.
\end{equation}
\end{proposition}

We prove this proposition in the appendix.

\par Given any shape $A\subseteq\C^n\simeq\R^{2n}$ and any scalar $b\in[0,\infty)$, we denote by $bA$ the uniformly scaled shape $\left\{b(\boldsymbol{\alpha}-\boldsymbol{c}_A)+\boldsymbol{c}_A:\boldsymbol{\alpha}\in A\right\}$, with $\boldsymbol{c}_A$ being the centre of mass of $A$.
\par  Given any shape $A\subseteq\C^{n}$, we define its dual, denoted $A^*$, as follows: \[ A^*:=\left\{\x\in\C^{n}:\left|\x\cdot\left(\y-\boldsymbol{c}_A\right)\right|<1\text{ for all }\y\in A\right\}.\]
For example, consider a tube of radius $\sim\delta$ and length $\sim1$ in $\R^n$, not necessarily centred at the origin. Its dual is a slab of radius $\sim\delta^{-1}$ and thickness $\sim 1$ centred at the origin. 
\par By fixing a bump function for the unit ball, and then pre-composing the bump with translation and isotropic scaling (and rotation if we wish), we can construct smooth bump functions for balls in $\C^n$ in a uniform way. In a similar manner, we can uniformly construct bump functions for complex tubes as well. These uniformly constructed bump functions will be called ``\emph{the} bump functions" for the balls and the tubes.
\section{A bridge between different scales}\label{sect_lem}
To study the high-frequency part of a later integral, we will need the following geometric fact. 
\begin{fact}\label{fact:geom_fact} Let $\T$ be any collection of essentially distinct complex tubes of length $\sim 1$ and radius $\sim D$, and $\oomega$ be any point in $B_{\C^n}(\boldsymbol{0},\lambda)\setminus B_{\C^n}(\boldsymbol{0},\rho)$, with $\lambda D^{-1}\ll\lambda^{-1}\ll\rho\ll1$ (say $\lambda:=D^{\frac{\epsilon}{100n}}$ and $\rho:=D^{\epsilon^3}\lambda^{-1}$). Then, the number of enlarged dual complex slabs $\lambda T^*$ (centred at the
origin of width $\sim\lambda$ and thickness $\sim\lambda D^{-1}$) passing through the point $\oomega$, not counting any multiplicity\footnote{For example, imagine two complex tubes $T_1$ and $T_2$ such that $T_2$ is a translate of a tube that is not essentially distinct from $T_1$. Then, the two dual complex slabs $T_1^*$ and $T_2^*$ are not essentially distinct. If both $\lambda T_1^*$ and $\lambda T_2^*$ pass through the point $\oomega$, they would count as one enlarged dual slab rather than two.}, is \[\lesssim\lambda^2\|\oomega\|^{-2}D^{2(n-2)}.\]
\end{fact}

\begin{proof}
It's not hard to see from symmetry that the maximum is no more than a constant multiple of the average number of enlarged dual slabs in a maximal $D^{-1}$-separated collection that pass through a point on the sphere $\partial B_{\C^n}(\boldsymbol{0},\|\oomega\|)$. A double-counting argument then gives \begin{align*}
\#\{\text{enlarged dual slabs passing through }\oomega\}&\sim\frac{D^{2(n-1)}\cdot\left(\lambda D^{-1}\right)^2\|\oomega\|^{2(n-2)} }{\|\oomega\|^{2(n-1)}}\\
&=\lambda^2\|\oomega\|^{-2}D^{2(n-2)}.\hfill\qedhere
\end{align*}
\end{proof}

In the proof below, we do not make a clear distinction between integrals over (subsets of) $\C^n$ and integrals over (subsets of) $\R^{2n}$. The readers shall treat all integrals as over $\R^{2n}$, especially when Fourier transforms are involved.

\begin{lemma}\label{lem:heavy_ball_lemma}
Suppose $P$ is a set of essentially distinct unit balls in $B_{\C^n}(0,D)$, and $\T$ is a set of essentially distinct tubes of length $D$ and radius $1$ in $B_{\C^n}(0,D)$ such that each ball of $P$ lies in $\sim E$ (more specifically, at least $E$ and less than $2E$) tubes of $\T$, Then, for any small $\epsilon>0$ and $1\ll \rho^{-1}\ll\lambda\ll D$ (say $\lambda:=D^{\frac{\epsilon}{100n}}$ and $\rho:=D^{\epsilon^3}\lambda^{-1}$). at least one of the following is true:
\begin{enumerate}
\item (Thin case) $|P|\lesssim(\log D)\lambda^{2}\rho^{-2}E^{-2}D^{2(n-1)}|\T|$.
\item (Thick case) There is a set of finitely overlapping $2\lambda$-balls $Q_j$ such that \begin{enumerate}
\item $\bigcup_j Q_j$ contains a $\gtrsim(\log D)^{-1}$ fraction of the balls $q\in P$; and
\item each $Q_j$ intersects $\gtrsim E\lambda^{-2}\rho^{-2n}$ tubes $T\in\T$.
\end{enumerate}
\end{enumerate}
\end{lemma}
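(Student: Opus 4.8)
The plan is to double count the incidence set $\mathcal{I}:=\{(q,T)\in P\times\T : q\subseteq T\}$, which has $|\mathcal{I}|\sim E|P|$, and to build the dichotomy from a threshold argument at scale $\lambda$. First I would run a few rounds of dyadic pigeonholing --- on $E$, on the number of tubes of $\T$ meeting a given ball of $P$, on the number of balls of $P$ contained in a given tube, and on the dyadic scales of separation between pairs of balls and between pairs of tube-directions --- reducing to the case where all of these quantities are constant up to a factor of $2$. The total loss is a fixed power of $\log D$, and since $\rho^{-1}=D^{-\epsilon^3}\lambda$ dominates any fixed power of $\log D$, this is exactly why the statement carries the slack parameter $\rho$ rather than sharp constants.

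Next I would fix, once and for all, a finitely overlapping cover of $B_{\C^n}(0,D)$ by $2\lambda$-balls, arranged so that every unit ball is contained in one of them, and split according to whether a good sub-family exists. The key observation is that any cover-ball $Q$ containing $\gtrsim\rho^{-2n}$ balls of $P$ automatically meets $\gtrsim E\lambda^{-2}\rho^{-2n}$ tubes of $\T$: every ball of $P$ inside $Q$ lies in $\sim E$ tubes, each of which meets $Q$, whereas a radius-$1$ complex tube meets the $2\lambda$-ball $Q$ in a set of volume $\lesssim\lambda^2$ and hence contains $\lesssim\lambda^2$ essentially distinct unit balls, so that counting incidences inside $Q$ yields $E\cdot\#\{q\in P:q\subseteq Q\}\lesssim\lambda^2\cdot\#\{T\in\T:T\cap Q\neq\emptyset\}$. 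Call such a $Q$ \emph{heavy}. If the heavy cover-balls together contain a $\gtrsim\lambda^{-2(n+1)}\rho^{-2n}$-fraction of $P$, they serve as the $Q_j$ of the thick case. Otherwise at least half of $P$ --- call this set $P_0$ --- is contained in \emph{light} cover-balls, each holding fewer than $\rho^{-2n}$ balls of $P$; and since any ball of radius $d\geq\lambda$ is covered by $\lesssim(d/\lambda)^{2n}$ of our cover-balls, this gives, for every $q\in P_0$ (with centre $c_q$) and every $d\geq\lambda$, the multiscale spreading bound $\#\{q'\in P_0:|c_{q'}-c_q|\leq d\}\lesssim(d/\lambda)^{2n}\rho^{-2n}$ (together with the trivial packing bound when $d<\lambda$). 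It then remains to prove, from this spreading together with the hypothesis that each ball lies in $\sim E$ tubes, that $|P_0|\lesssim\lambda^2\rho^{-2}E^{-2}D^{2(n-1)}|\T|$; as $|P|\leq 2|P_0|$, this is the thin case.

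For this last estimate I would double count the incident triples $(q,T,T')$ with $q\in P_0$ and $q\subseteq T\cap T'$, organised by the dyadic scales of separation. Two ingredients come from the Proposition (applied with $\delta=1$): two radius-$1$, length-$D$ tubes both containing balls whose centres are at distance $d$ make an angle $\lesssim 1/d$, so a fixed such pair of balls lies in $\lesssim(D/d)^{2(n-1)}$ essentially distinct tubes; and a pair of tubes meeting at angle $\gtrsim 1/\lambda$ has intersection of volume $\lesssim\lambda^2$, hence contains $\lesssim\lambda^2$ balls of $P_0$. The third ingredient is Fact~\ref{fact:geom_fact}, applied after translating a ball to the origin, bounding the multiplicity with which the directions of the tubes through a fixed ball can be near-perpendicular, at a given scale, to a far ball. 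After a further pigeonhole arranging that the $\sim E$ tubes through a typical ball of $P_0$ are directionally spread at scale $1/\lambda$, one has $\sum_{q\in P_0}\#\{(T,T')\in\T^2:q\subseteq T\cap T',\ \angle(T,T')\gtrsim 1/\lambda\}\gtrsim E^2|P_0|$; estimating this same sum from above, scale by scale, via the three ingredients and the spreading bound, should then yield the claimed bound on $|P_0|$.

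The step I expect to be the main obstacle is precisely this last double count: one must balance, against the target exponents, the contributions of nearby versus far pairs of balls and of large versus small angles between tubes, and in particular one must handle the regime in which $E$ is comparable to the total number $\sim D^{2(n-1)}$ of directions --- the regime in which the crude bound ``$\lesssim(D/d)^{2(n-1)}$ tubes through a pair of balls at distance $d$'' is too lossy and in which one has to exploit the pointwise multiplicity bound for dual slabs from Fact~\ref{fact:geom_fact} rather than merely the number of directions in a cap. The remaining ingredients --- the pigeonholing, the construction and use of the $2\lambda$-cover, the within-ball incidence count, and the extraction of the spreading bound --- are routine.
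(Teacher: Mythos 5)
Your approach is genuinely different from the paper's and it has a gap that is not merely technical. The paper proves this lemma by Fourier analysis: it sets $f=\sum_{q}\psi_q$ and $g=\sum_T\psi_T$, estimates $I(P,\T)\lesssim\int\widehat{f}\,\overline{\widehat{g}}$, and splits into low and high frequency at scale $\rho$. The low-frequency piece (estimated via the rapid decay of $\widecheck{\eta}$) yields the thick case, and the high-frequency piece (estimated via Cauchy--Schwarz, the almost-orthogonality of the $\widehat{g_\theta}$'s, and the pointwise dual-slab multiplicity bound of \Cref{fact:geom_fact}) yields the thin case. You instead propose a purely combinatorial dichotomy over a fixed $2\lambda$-cover. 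Your thick-branch observation is correct and is a clean alternative: a cover-ball $Q$ containing $\gtrsim\rho^{-2n}$ balls of $P$ does meet $\gtrsim E\lambda^{-2}\rho^{-2n}$ tubes, by counting incidences inside $Q$.

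The gap is in the dichotomy itself, not just in the triple-count you flag as the ``main obstacle.'' Your ``light'' hypothesis (every cover-ball contains $<\rho^{-2n}$ balls of $P$) does not imply the thin-case bound, so the thin branch of your argument is attempting to prove a false implication and no amount of care with the triple count can rescue it. Concretely, take $n=2$ and place $c\rho^{-2n}=c\rho^{-4}$ balls of $P$ uniformly in each $2\lambda$-cover-ball of $B_{\C^2}(0,D)$ for a small constant $c$, so every cover-ball is light and $|P|\sim(D/\lambda)^4\rho^{-4}$, and let $\T$ be a maximal family of essentially distinct radius-$1$ length-$D$ tubes, so $|\T|\sim D^4$ and $E\sim D^{2(n-1)}=D^2$. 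The thin bound would assert
\[
(D/\lambda)^4\rho^{-4}\lesssim\lambda^{2}\rho^{-2}E^{-2}D^{2(n-1)}|\T|\sim\lambda^2\rho^{-2}D^2,
\]
i.e.\ $D^2\lesssim\lambda^6\rho^2=\lambda^4 D^{2\epsilon^3}$, which is false once $D$ is large. The lemma itself is not contradicted: in this configuration every $2\lambda$-cover-ball meets $\sim\lambda^2 D^2$ tubes, which exceeds $E\lambda^{-2}\rho^{-2n}=D^2\lambda^{-2}\rho^{-4}$ because $\lambda^4\geq\rho^{-4}$, so the thick conclusion holds with the $Q_j$'s taken to be the cover-balls themselves. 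What this example shows is that ``$Q$ contains many balls of $P$'' is a \emph{sufficient} but not \emph{necessary} condition for ``$Q$ meets many tubes''; the lemma's thick alternative is about tube-richness of the $Q_j$'s, and a configuration can be tube-rich everywhere while $P$ is spread thin across the cover. Your dichotomy sends exactly such configurations into the thin branch, where the asserted bound is false. To repair this along combinatorial lines you would need to define heavy/light in terms of the number of tubes meeting $Q$ rather than the number of balls of $P$ in $Q$, and then prove the thin bound from the resulting tube-sparsity hypothesis --- a quite different (and, as far as I can see, harder) task than the spreading-based triple count you sketch, and one for which the paper's $L^2$ high-frequency argument is doing real work that has no obvious elementary substitute.
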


\begin{proof}
Let's begin with some pigeonholing. For each dyadic $E\lesssim k\lesssim D^{2(n-1)}$, let $P_k\subset P$ be the set of balls contained in $\sim k$ of the thickened tubes $\nb_\lambda(T)$. Then, there exists an index $k_0$ for which \[|P_{k_0}|\gtrsim_n(\log D)^{-1}|P|.\]
We will study the subset $P':=P_{k_0}$ instead of~$P$.
\par Define \[f:=\sum_{q\in P'}\psi_q,\qquad g:=\sum_{T\in\T}\psi_T,\]
and \[I(P',\T):=\left\{(q,T)\in P'\times\T:|q\cap T|\geq\tfrac{1}{2}|q|\right\}.\]
Then, treating $f$ and $g$ as functions on $\R^{2n}$, \[I(P',\T)\lesssim\int fg=\int\widehat{f}\,\overline{\widehat{g}}=\int\eta\widehat{f}\,\overline{\widehat{g}}+\int(1-\eta)\widehat{f}\,\overline{\widehat{g}},\]
where $\eta$ is ``the'' smooth bump function approximating $\chi_{B(\boldsymbol{0},\rho)}$.

\par In the low frequency case, \begin{align*}
I(P',\T)\lesssim\int\eta\widehat{f}\,\overline{\widehat{g}}=\int f\left(g*\widecheck{\eta}\right)=\sum_{q\in P'}\sum_{T\in\T}\int\psi_q\left(\psi_T*\widecheck{\eta}\right).
\end{align*}
Since $\widecheck{\eta}$ decays rapidly outside of $B\left(\boldsymbol{0},\rho^{-1}\right)$, $\lambda\gg\rho^{-1}\gg 1$ and $\supp\psi_T\subseteq 2T$, \[\left|\psi_T*\widecheck{\eta}\right|\lesssim\sup_{\oomega\in\C^n}\left|B(\oomega,\lambda)\cap 2T\right|\cdot\left\|\widecheck{\eta}\right\|_{L^\infty}\sim\lambda^2\cdot\rho^{2n}\]
Hence, for each $q\in P'$, \[\sum_{T\in\T}\int\psi_q\left(\psi_T*\widecheck{\eta}\right)\lesssim\lambda^2\rho^{2n}\cdot\#\left\{T\in\T: q\cap\nb_{\lambda}(T)\neq\varnothing\right\}.\]
It follows that \[E|P'|\sim I(P',\T)\lesssim\sum_{q\in P'}\lambda^2\rho^{2n}\cdot\#\left\{T\in\T: q\cap\nb_{\lambda}(T)\neq\varnothing\right\}.\]
Thus, for balls $q\in P'=P_{k_0}$, we have \[\#\left\{T\in\T: q\cap\nb_{\lambda}(T)\neq\varnothing\right\}\gtrsim E\lambda^{-2}\rho^{-2n}.\]
Note that \[\left\{T\in\T: q\cap\nb_{\lambda}(T)\neq\varnothing\right\}=\left\{T\in\T: T\cap\nb_{\lambda}(q)\neq\varnothing\right\},\]
and each neighbourhood $\nb_{\lambda}(q)$ can be covered by a $2\lambda$-ball. 

\par As for the high frequency case, \begin{align*}
I(P',\T)\lesssim\int(1-\eta)\widehat{f}\,\overline{\widehat{g}}\leq \left(\int(1-\eta)\left|\widehat{f}\right|^2\right)^{\!\frac{1}{2}}\left(\int(1-\eta)\left|\widehat{g}\right|^2\right)^{\!\frac{1}{2}}
\end{align*}
where \[\left(\int(1-\eta)\left|\widehat{f}\right|^2\right)^{\!\frac{1}{2}}\leq\left\|\widehat{f}\right\|_{L^2}=\left\|f\right\|_{L^2}\sim|P'|^{\frac{1}{2}}.\]
Group the tubes in $\T$ according to their directions: Let $\Theta$ be a partition of $\C\P^{n-1}$ into almost-$D^{-1}$-caps $\theta$. Define $\T_\theta$ to be the set of tubes in $\T$ with direction in $\theta$, and set \[g_\theta:=\sum_{T\in\T_\theta}\psi_T.\]
For each $\theta\in\Theta$, fix a direction $\vv_{\theta}\in\theta$ and a tube $T_\theta$ centred at the origin of length $\sim 1$ and radius $~\sim D^{-1}$ in direction~$\vv_{\theta}$ which is contained in all tubes centred at the origin of length $\sim 1$ and radius $~\sim D^{-1}$ with direction in $\theta$.
Then, for each $\theta\in\Theta$ and $T\in\T_{\theta}$, $\widehat{\psi_T}$ decays rapidly outside of $T_{\theta}^*$. So, \begin{align*}
&\int(1-\eta)\left|\widehat{g}\right|^2\\
&= \int(1-\eta)\left|\sum_{\theta\in\Theta}\widehat{g_\theta}\right|^2\\
&\sim\int(1-\eta(\oomega))\left|\sum_{\theta:\lambda T_\theta^*\ni\oomega}\widehat{g_\theta}(\oomega)\right|^2 d\oomega+\int(1-\eta(\oomega))\left|\sum_{\theta:\lambda T_\theta^*\not\ni\oomega}\widehat{g_\theta}(\oomega)\right|^2 d\oomega\\
&\lesssim\int_{\C^n\setminus B_{\C^n}(\boldsymbol{0},\rho)}(1-\eta(\oomega))\cdot\#\{\theta\in\Theta:\lambda T_\theta^*\ni\oomega \}\cdot\sum_{\theta\in\Theta}\left|\widehat{g_\theta}(\oomega)\right|^2 d\oomega\\
&\lesssim \lambda^{2}\rho^{-2}D^{2(n-2)}\sum_{\theta\in\Theta}\int\left|\widehat{g_\theta}\right|^2.
\end{align*}
where, in the last inequality, we used \Cref{fact:geom_fact}, and \[\sum_{\theta\in\Theta}\int\left|\widehat{g_\theta}\right|^2=\sum_{\theta\in\Theta}\int\left|g_\theta\right|^2\lesssim\sum_{T\in\T}\int\left|\psi_T\right|^2\sim|\T|\cdot D^2.\]
Putting things together, we get \[E|P'|\sim I(P',\T)\lesssim|P|^{\frac{1}{2}}\cdot \lambda\rho^{-1}D^{n-2}\cdot|\T|^\frac{1}{2}D,\]
which reduces to \[|P'|\lesssim E^{-2}\lambda^2\rho^{-2}D^{2(n-1)}|\T|. \hfill\qedhere\]
\end{proof}

\section{Proof of the main theorems}\label{sect_thm}
We rephrase here \Cref{thm:intro_1st_main}.
\begin{theorem}\label{thm:1st_main} Suppose $1\leq W\leq\delta^{-1}$ and $1\leq N\leq W^{-1}\delta^{-1}$. Let $\Theta$ be a partition of $\C\P^{n-1}$ into almost-caps $\theta$ of radius $\sim\delta$. And for each $\theta\in\Theta$, let $\left\{T_{\theta,j}:1\leq j\lesssim W^{2(n-1)}N\right\}$ be a family of essentially distinct complex tubes of radius $\delta$ and length $1$ essentially contained in $B_{\C^n}(\boldsymbol{0},1)$ with the property that, for each direction $\theta$ and for a fixed maximal collection of essentially distinct complex tube of radius $W^{-1}$ and length $1$ essentially contained in $B_{\C^n}(\boldsymbol{0},1)$ with direction in~$\theta$, each of the complex $W^{-1}$-tube contains, essentially, $\sim N$ of the tubes $T_{\theta,j}$. Let $\T$ denote the set of all the tubes $T_{\theta,j}$. Then, for each $\epsilon>0$, there exist constants $c_1(\epsilon)$ and $c_2(\epsilon)$ such that, for $r\geq c_1(\epsilon)\,\delta^{2(n-1)-\epsilon}|\T|$, \[|P_r(\T)|\leq c_2(\epsilon)\,\delta^{-\epsilon}W^{-2(n-1)}r^{-2}|\T|^2.\]
\end{theorem}

\begin{proof}
Let's start with two base cases: \begin{enumerate}[label=(\Roman*)]
\item If $c_0(n,\epsilon)<\delta<1$, where $c_0$ is a sufficiently small\footnote{Being sufficiently small here means being small enough that all the expressions of the form $\delta^{\epsilon\cdot h(\epsilon)}$, with $h(\epsilon)$ a real-coefficient polynomial, appearing later in this section are as small or as large as needed.} constant, then $P_r(\T)\neq\varnothing$ only if $r\lesssim1$, under which condition both $|P_r(\T)|$ and $W^{-2(n-1)}r^{-2}|\T|^2$ are~$\sim 1$.
\item If $r\geq\alpha_n\,\delta^{-2(n-1)}$, where $\alpha_n$ is a sufficiently large dimensional constant, then $P_r(\T)=\varnothing$, and the claimed inequality is trivially true. 
\end{enumerate}

\par Now, assume that the following estimate holds when either $\widetilde{r}\geq 2r$ or $\widetilde{\delta}\geq \delta^{-\epsilon^{10}}\delta$ or both, and $\widetilde{r}\geq c_1(\epsilon)\,\widetilde{\delta}^{2(n-1)-\epsilon}\left|\widetilde{\T}\right|$: \[\left|P_{\widetilde{r}}\left(\widetilde{\T}\right)\right|\leq c_2(\epsilon)\,\widetilde{\delta}^{-\epsilon}\widetilde{W}^{-2(n-1)}\widetilde{r}^{-2}\left|\widetilde{\T}\right|^2.\]
\par We can assume in addition and without loss of generality that $W\lesssim_{n,\epsilon}\delta^{-1+\frac{\epsilon}{2(n-1)}}$ since otherwise \[\delta^{2(n-1)-\epsilon}|\T|\gtrsim_{n}\delta^{2(n-1)-\epsilon}W^{2(n-1)}\delta^{-2(n-1)}\gtrsim_{n,\epsilon}\delta^{-2(n-1)-\epsilon}\gg\delta^{-2(n-1)},\]
and it would follow from the second base case that $P_r(\T)=\varnothing$. This assumption implies $W\ll\left(\lambda\delta\right)^{-1}$, where $\lambda:=\delta^{-\frac{\epsilon}{100n}}$. 
\par Let $P:=P_r(\T)\setminus P_{2r}(\T)$. If $|P_r(\T)|\geq 10|P|$, then by our induction hypothesis \begin{align*}
|P_r(\T)|\leq\frac{10}{9}|P_{2r}(\T)|
&\leq\frac{10}{9}c_2(\epsilon)\,\delta^{-\epsilon}W^{-2(n-1)}(2r)^{-2}|\T|^2\\
&\leq c_2(\epsilon)\,\delta^{-\epsilon}W^{-2(n-1)}r^{-2}|\T|^2.
\end{align*}
Thus, we may also assume $|P_r(\T)|<10|P|$. 

\par If we are in the thin case, then, taking $E:=r$ in \Cref{lem:heavy_ball_lemma}, we get \begin{align*}
|P|&\lesssim(-\log\delta)\delta^{-\frac{\epsilon}{25n}+2\epsilon^3}r^{-2}\delta^{-2(n-1)}|\T|\\
&\lesssim(-\log\delta)\delta^{-\frac{\epsilon}{25n}+2\epsilon^3}r^{-2}W^{-2(n-1)}|\T|^2\\
&=(-\log\delta)\delta^{\left(1-\frac{1}{25n}\right)\epsilon+2\epsilon^3}\cdot\delta^{-\epsilon}r^{-2}W^{-2(n-1)}|\T|^2,
\end{align*}
where, in the second inequality, we used the fact that $|\T|\sim W^{2(n-1)}\delta^{-2(n-1)}N$.
It follows that for $\delta$ sufficiently small, \[|P_r(\T)|<10|P|\leq c_2(\epsilon)\,\delta^{-\epsilon}W^{-2(n-1)}r^{-2}|\T|^2.\]

\par If we are in the thick case, then there exists a collection $\widetilde{P}$ of essentially distinct $2\lambda\delta$-balls $Q_j$ such that $\bigcup_jQ_j$ contains a $\gtrsim(-\log\delta)^{-1}$ fraction of the $\delta$-balls in $P$, and each $Q_j$ intersects~$\gtrsim r\delta^{-\frac{n-1}{50n}\epsilon+2n\epsilon^3}$ tubes of $\T$. Thicken each $\delta$-tube in $\T$ to a $\lambda\delta$-tube, and call the new collection $\widetilde{\T}$. For each $1\leq M\lesssim\lambda^{2(n-1)}$, let $\widetilde{\T}_M$ be the set of $2\lambda\delta$-tubes in $\widetilde{\T} $ which contain $\sim M$ tubes\footnote{At least $M$ and strictly fewer than $2M$ tubes, to be precise.} of $\T$. By dyadic pigeonholing, we can find a particular $M_0$ such that $\widetilde{\T}_{M_0}$ contains a $\gtrsim\left((n-1)\log\lambda\right)^{-1}$ fraction of the incidences between $\widetilde{\T}$ and $\widetilde{P}$. 
\par Observe that for each $2\lambda\delta$-tube $\widetilde{T}$, every $2\lambda\delta$-tube that is not essentially distinct from $\widetilde{T}$ lies completely in the tube $10\widetilde{T}$. Let $\widetilde{\T}_{M_0,\max}\subseteq\widetilde{\T}_{M_0}$ be a maximal subset of essentially distinct $2\lambda\delta$-tubes. Then, \[I(10\widetilde{\T}_{M_0,\max},\widetilde{P})\supseteq I(\widetilde{\T}_{M_0},\widetilde{P}),\]
where $10\widetilde{\T}_{M_0,\max}:=\left\{10\widetilde{T}:\widetilde{T}\in\widetilde{\T}_{M_0,\max}\right\}$.
Further, since each $\delta$-tube is contained in $\lesssim_n1$ essentially distinct $2\lambda\delta$-tubes,\[|\T|\gtrsim_n M_0\left|10\widetilde{\T}_{M_0,\max}\right|=M_0\left|\widetilde{\T}_{M_0,\max}\right|.\]
From now on we write $\dbtilde{\T}:=10\widetilde{\T}_{M_0,\max}$.

\par It's easy to show by contradiction that for $\delta$ sufficiently small, a $\gtrsim_\epsilon \delta^{\epsilon^7}$ fraction\footnote{The choice of the exponent $7$ in the expression $\delta^{\epsilon^7}$ and the exponent $5$ in the expression $\delta^{\epsilon^5}$ in the next line is somewhat arbitrary, other powers could work just as fine for our proof, as long as the latter exponent is sufficiently large and the former is even larger.} of the $2\lambda\delta$-balls $Q_j\in\widetilde{P}$ intersects $\gtrsim\delta^{\epsilon^5}\cdot r\delta^{-\frac{n-1}{50n}\epsilon+2n\epsilon^3}M_0^{-1}$ tubes of $\dbtilde{\T}$, with the implicit constant in the latter inequality being, say, $\widetilde{C}=\widetilde{C}(n,\epsilon)$. Since each $2\lambda\delta$-ball contains $\lesssim_n\lambda^{2n}$ essentially distinct $\delta$-balls, \[
|P|\lesssim(-\log\delta)\lambda^{2n}\left|\widetilde{P}\right|
\lesssim(-\log\delta)\delta^{-\frac{\epsilon}{50}}\cdot\delta^{-\epsilon^7}\left|P_{\widetilde{r}}\left(\dbtilde{\T}\right)\right|,\]
where $\widetilde{r}:=\left\lfloor\widetilde{C}r\delta^{-\frac{n-1}{50n}\epsilon+2n\epsilon^3+\epsilon^5}M_0^{-1}\right\rfloor$.
\par It follows from the given properties of $\T$ that $\dbtilde{\T}$, together with $\widetilde{\delta}:=2\lambda\delta$ and $\widetilde{W}:=W$, meets the spacing condition in the induction hypothesis. Moreover, \begin{align*}
\widetilde{r}\gtrsim r\delta^{-\frac{n-1}{50n}\epsilon+2n\epsilon^3+\epsilon^5}M_0^{-1}
&\geq c_1(\epsilon)\delta^{2(n-1)-\epsilon}|\T|M_0^{-1}\delta^{-\frac{n-1}{50n}\epsilon+2n\epsilon^3+\epsilon^5}\\
&\gtrsim_\epsilon (\lambda\delta)^{2(n-1)-\epsilon}\lambda^{-2(n-1)+\epsilon}\left|\dbtilde{\T}\right|\delta^{-\frac{n-1}{50n}\epsilon+2n\epsilon^3+\epsilon^5}\\
&\sim(\lambda\delta)^{2(n-1)-\epsilon}\left|\dbtilde{\T}\right|\delta^{-\frac{1}{100n}\epsilon^2+2n\epsilon^3+\epsilon^5},
\end{align*}
with $\delta^{-\frac{1}{100n}\epsilon^2+2n\epsilon^3+\epsilon^5}\gg 1$. Thus, by the induction hypothesis, \begin{align*}
\left|P_{\widetilde{r}}\left(\dbtilde{\T}\right)\right|&\leq c_2(\epsilon)\,(2\lambda\delta)^{-\epsilon}W^{-2(n-1)}\widetilde{r}^{-2}\left|\dbtilde{\T}\right|^2\\
&\leq c_2(\epsilon)\,\delta^{-\epsilon}W^{-2(n-1)}r^{-2}|\T|^2\cdot\lambda^{-\epsilon}\widetilde{C}^{-2}\delta^{\frac{n-1}{25n}\epsilon-4n\epsilon^3-2\epsilon^5}.
\end{align*}
Hence,
\begin{align*}
|P_r(\T)|\lesssim|P|&\lesssim(-\log\delta)\delta^{-\frac{\epsilon}{50}-\epsilon^7}\left|P_{\widetilde{r}}\left(\dbtilde{\T}\right)\right|\\
&\lesssim c_2(\epsilon)\delta^{-\epsilon}W^{-2(n-1)}r^{-2}|\T|^2\cdot\widetilde{C}^{-2}(-\log\delta)\delta^{\frac{n-2}{50n}\epsilon+\frac{1}{100n}\epsilon^2-4n\epsilon^3-2\epsilon^5-\epsilon^7},
\end{align*}
where $\widetilde{C}^{-2}(-\log\delta)\delta^{\frac{n-2}{50n}\epsilon+\frac{1}{100n}\epsilon^2-4n\epsilon^3-2\epsilon^5-\epsilon^7}\ll1$ when $n\geq 2$. This closes the induction.
\end{proof}
Below is a restatement of \Cref{thm:intro_2nd_main}: 

\begin{theorem}\label{thm:2nd_main}
Let $n=2$~or~$3$, $1\leq W\leq\delta^{-1}$, $N_0$ be a positive integer independent of $\delta$ and $W$, and $\T$ be a collection of essentially distinct complex $\delta$-tubes essentially contained in $B_{\C^n}(\boldsymbol{0},1)$. Suppose, for some maximal family of essentially distinct complex $W^{-1}$-tubes essentially contained in $B_{\C^n}(\boldsymbol{0},1)$, each of the $W^{-1}$-tubes contains exactly $N_0$ complex $\delta$-tubes $T\in\T$. Then, for richness $r\geq\max\left\{C_{\epsilon}\delta^{2(n-1)-\epsilon}|\T|,1+N_0\right\}$, \[|P_r(\T)|\lesssim_{\epsilon,n,N_0} \delta^{-\epsilon}|\T|^{\frac{n}{n-1}}r^{-\frac{n+1}{n-1}}.\]
\end{theorem}

Note that, given any collection $\T$ of essentially distinct complex $\delta$-tubes obeying the slightly looser spacing condition that each complex $W^{-1}$-tube in some fixed maximal family contains at most $N_0$ of the tubes from $\T$, we can always augment the collection $\T$ with extra $\delta$-tubes if needed to obtain a collection $\T_{\textrm{aug}}\supseteq\T$ of essentially distinct complex $\delta$-tubes such that each complex $W^{-1}$-tube in the fixed maximal family contains exactly $N_0$ of the complex $\delta$-tubes in $\T_{\textrm{aug}}$. So we have the following quick corollary, which we will use in the proof of \Cref{thm:diff_set_size}. 

\begin{corollary}\label{cor:2nd-thm}
Let $n=2$~or~$3$, $1\leq W\leq\delta^{-1}$, $N_0$ be a positive integer independent of $\delta$ and $W$, and $\T$ be a collection of essentially distinct complex $\delta$-tubes essentially contained in $B_{\C^n}(\boldsymbol{0},1)$. Suppose, for some maximal family of essentially distinct complex $W^{-1}$-tubes essentially contained in $B_{\C^n}(\boldsymbol{0},1)$, each of the $W^{-1}$-tubes contains at most $N_0$ complex $\delta$-tubes $T\in\T$. Then, for richness $r\geq\max\left\{a_nC_{\epsilon}\delta^{2(n-1)-\epsilon}N_0W^{4(n-1)},1+N_0\right\}$, where $a_n$ denotes the smallest dimensional constant that ensures $|\T|\leq a_nN_0W^{4(n-1)}$, we have \[|P_r(\T)|\lesssim_{\epsilon,n,N_0} \delta^{-\epsilon}W^{4n}r^{-\frac{n+1}{n-1}}.\]
\end{corollary}

\begin{proof}[Proof of \Cref{thm:2nd_main}]
We will again do a double induction on the scale $\delta$ and the richness $r$. Here are the base cases: \begin{enumerate}[label=(\Roman*)]
\item Suppose $r\gtrsim_n N_0\delta^{-2n+2}$, with the implicit dimensional constant in the lower bound being sufficiently large. Then, $r\gtrsim_n N_0\delta^{-2n+2}\geq N_0W^{2n-2}$ would imply $P_r(\T)=\varnothing$, and the claimed inequality would be trivially true.

\item Suppose $\delta\sim_{n}1$. Then on one hand, 
\[\left|P_r(\T)\right|\leq\left|P_{1+N_0}(\T)\right|\lesssim_n\left(W^{2(2n-2)}\right)^{\!2}\!\cdot N_0^2\leq\delta^{-(8n-8))}N_0^2\sim_{n,N_0}1.\]
On the other hand, $P_r(\T)$ is nonempty only if $r\lesssim_n N_0W^{2n-2}$, in which case \begin{align*}
|\T|^{\frac{n}{n-1}}r^{-\frac{n+1}{n-1}}&\gtrsim_n\left(N_0W^{2(2n-2)}\right)^{\!\frac{n}{n-1}}\!\left(N_0W^{2n-2}\right)^{\!-\frac{n+1}{n-1}}\\
&\geq N_0^{-\frac{1}{n-1}}\delta^{-(2n-2)}\sim_{n,N_0}1.
\end{align*}
Thus the claimed inequality follows.
\end{enumerate}

\par For the sake of induction, let's assume that the bound below holds when either $\widetilde{r}\geq2r$ or $\widetilde{\delta}\geq\delta^{-\epsilon^{10}}\delta$ or both, and $\widetilde{r}\geq\max\left\{C_{\epsilon}\widetilde{\delta}^{2(n-1)-\epsilon}\left|\widetilde{\T}\right|,1+N_0\right\}$: 
\[\left|P_{\widetilde{r}}\left(\widetilde{\T}\right)\right|\leq c_{n,\epsilon,N_0}\, \delta^{-\epsilon}\left|\widetilde{\T}\right|^{\frac{n}{n-1}}\widetilde{r}^{-\frac{n+1}{n-1}}.\]
We will also assume without loss of generality that $W<\delta^{-1+\frac{\epsilon}{2n}}\ll\left(\lambda\delta\right)^{-1}$, with~$\lambda:=\delta^{-\frac{\epsilon}{100n}}$. 

\par By dyadic pigeonholing, we can find an $\alpha=\alpha(r)>W^{-1}$ such that the subcollection \[P_{r,\alpha}(\T):=\left\{p\in P_r(\T):\substack{\text{The maximal angle between complex tubes}\\ \text{of $\T$ passing through the $\delta$-ball $p$ is $\sim\alpha$.}}\right\}\]
has size $\gtrsim(-\log\delta)^{-1}|P_r(\T)|$. The lower bound $W^{-1}$ comes from the requirement $r\geq1+N_0$, which infers that the complex $\delta$-tubes passing through any $r$-rich $\delta$-ball~$p$ cannot all lie in the same $W^{-1}$-tube. 

\par Let's first consider two cases where $r<\delta^{-\epsilon^3}$. 
\begin{enumerate}[label=(\alph*)]
\item Suppose $\alpha\leq\delta^{\frac{3}{2}\epsilon^3}$. Let $\{\tau\}\subseteq\C\P^{n-1}$ be a maximal set of $\alpha$-separated directions. For each $\tau$, cover $B_{\C^n}(\boldsymbol{0},1)$ with $\sim_{n}\alpha^{-2(n-1)}$ complex $\alpha$-tubes $\square_\tau$ in the direction $\tau$. Let $\T_{\square_\tau}$ denote the collection of complex tubes $T\in\T$ that are essentially contained in $\square_\tau$. Then, our assumption about the spacing of the complex tubes implies that $\left|\T_{\square_\tau}\right|\lesssim_{n} N_0(\alpha W)^{4(n-1)}$. 
\par Now, fix a complex tube $\square_\tau$, and rescale $\square_\tau$ to essentially a unit ball. Then, the complex $\delta$-tubes in $\T_{\square_\tau}$ become complex $\widetilde{\delta}$-tubes, the collection of which we call $\widetilde{\T}_{\square_\tau}$. Here, $\widetilde{\delta}:=\frac{\delta}{\alpha}\geqslant\delta^{-\frac{3}{2}\epsilon^3}\delta$. This collection $\widetilde{\T}_{\square_\tau}$ might not exactly match the spacing condition in the induction hypothesis with $\widetilde{W}:=\alpha W$
as $\widetilde{\T}_{\square_\tau}$ could be sparse; but by the argument preceding \Cref{cor:2nd-thm}, we still have the following bound at the richness $\widetilde{r}:=\max\left\{\left\lceil a_nC_{\epsilon}\widetilde{\delta}^{2(n-1)-\epsilon}N_0\widetilde{W}^{4(n-1)}\right\rceil,1+N_0\right\}\leq r$: 
\begin{equation}\label{eq:induction_hypothesis_small_r_small_alpha}
\left|P_{\widetilde{r}}\left(\widetilde{\T}_{\square_\tau}'\right)\right|
\lesssim_{n,\epsilon,N_0}\widetilde{\delta}^{-\epsilon}\widetilde{W}^{4n}\widetilde{r}^{-\frac{n+1}{n-1}}
<\widetilde{\delta}^{-\epsilon}\widetilde{W}^{4n}.
\end{equation}
Rescaling the unit ball back to the complex $\alpha$-tube $\square_\tau$, we see that each $\widetilde{r}$-rich $\widetilde{\delta}$-balls becomes a complex tube of radius $\delta$ and length $\frac{\delta}{\alpha}$, which can be viewed as an almost disjoint union (or, more precisely, a finitely overlapping union) of $\sim_{n}\alpha^{-2}$ of the $\delta$-balls in $P_{\widetilde{r},\alpha}(\T)$. As a result, \begin{align*}
\left|P_{r,\alpha(r)}(\T)\right|\leq\left|P_{\widetilde{r},\alpha(r)}(\T)\right|
&\lesssim_{n}\sum_{\tau}\sum_{\square_\tau}\left|P_{\widetilde{r}}\left(\widetilde{\T}_{\square_\tau}\right)\right|\cdot\alpha^{-2}\\
&\lesssim_{n,\epsilon,N_0}\alpha^{-4(n-1)}\cdot\left(\tfrac{\delta}{\alpha}\right)^{-\epsilon}(\alpha W)^{4n}\cdot\alpha^{-2}\\
&\sim_{n,N_0}\delta^{-\epsilon}|\T|^{\frac{n}{n-1}}\cdot\alpha^{2+\epsilon},
\end{align*}
where \begin{align*}
\alpha^{2+\epsilon}\leq\delta^{\frac{3}{2}\epsilon^3\cdot(2+\epsilon)}\leq\delta^{\frac{n+1}{n-1}\epsilon^3}\!\cdot\delta^{\frac{3}{2}\epsilon^4}<r^{-\frac{n+1}{n-1}}\!\cdot\delta^{\frac{3}{2}\epsilon^4}.
\end{align*}
So, we can close the induction in the case where $r<\delta^{-\epsilon^3}$ and $\alpha\leq\delta^{\frac{3}{2}\epsilon^3}$.

\item Next, suppose $\alpha>\delta^{\frac{3}{2}\epsilon^3}$. Then by our assumption about $r$,
\[\delta^{-\epsilon}|\T|^{\frac{n}{n-1}}r^{-\frac{n+1}{n-1}}\gtrsim_{n}\delta^{-\epsilon}N_0^{\frac{n}{n-1}}W^{4n}\delta^{\frac{n+1}{n-1}\epsilon^3}.\]
Thus, when $W>\delta^{-\frac{1}{2}+\frac{\epsilon}{8}}$, we have \begin{align*}
\delta^{-\epsilon}|\T|^{\frac{n}{n-1}}r^{-\frac{n+1}{n-1}}\gtrsim_{N_0}\delta^{-2n-\frac{\epsilon}{2}+\frac{n+1}{n-1}\epsilon^3}\gg\delta^{-2n}\gtrsim|P_r(\T)|.
\end{align*}
For this reason, we may as well assume $W\leq\delta^{-\frac{1}{2}+\frac{\epsilon}{8}}$ in the rest of part~{(b)}. Then, $W^{-2}\geq\delta^{-\frac{\epsilon}{4}+1}\gg\delta$. 
Let $\widetilde{\T}$ be the collection of complex $W^{-2}$-tubes obtained by thickening each complex $\delta$-tube in $\T$, and let $\widetilde{\T}_{\max}\subseteq\widetilde{\T}$ be a maximal subcollection of essentially distinct $W^{-2}$-tubes. Since $W^{-2}< W^{-1}$, the spacing condition of $\T$ implies that $\left|\widetilde{\T}_{\max}\right|\geq N_0^{-1}\left|\widetilde{\T}\right|$. Let $Q$ be a minimal covering of $B_{\C^n}(\boldsymbol{0},1)$ consisting of $W^{-2}$-balls; and for dyadic $X,M\geq 1$, let $Q_{X,M}$ be the collection of $W^{-2}$-balls in $Q$ that contains $\sim X$ of the $\delta$-balls in $P_{r,\alpha}(\T)$ and intersects $\sim M$ of\footnote{At least $M$ and strictly less than $2M$ of the complex tubes, to be precise.} the complex $\delta$-tubes in~$\T$. We shall note that the set $Q_{X,M}$ is nonempty only if $X\lesssim\left(\frac{W^{-2}}{\delta}\right)^{\!2n}$ and $r\leq M\lesssim W^{4(n-1)}$. Then, by dyadic pigeonholing twice, we can find some particular $X_0$ and $M_0$ such that $\bigcup_{q\in Q_{X_0,M_0}}q$ covers a $\gtrsim_n(-\log\delta)^{-2}$ fraction of the $\delta$-balls in $P_{r,\alpha}(\T)$. By definition, each $\delta$-ball in $P_{r,\alpha}(\T)$ is intersected by two complex tubes in $\T$ with directions differing by $\sim\alpha$. Hence, the number of $\delta$-balls from $P_{r,\alpha}(\T)$ contained in each $q$ is $\lesssim\alpha^{-2}M_0^2$; and consequently we have \[X_0\lesssim\alpha^{-2}M_0^2\lesssim\delta^{-3\epsilon^3}M_0^2.\]
By the induction hypothesis, \[\left|P_{M_0}\left(\widetilde{\T}_{\max}\right)\right|\lesssim W^{2\epsilon}\left|\widetilde{\T}\right|^{\frac{n}{n-1}}M_0^{-\frac{n+1}{n-1}}.\]
Therefore, \begin{align*}
|P_r(\T)|&\lesssim(-\log\delta)^2\cdot X_0\cdot\left|P_{M_0}\left(\widetilde{\T}_{\max}\right)\right|\\
&\lesssim(-\log\delta)^2\delta^{-3\epsilon^3} W^{2\epsilon}\left|\widetilde{\T}_{\max}\right|^{\frac{n}{n-1}}M_0^{2-\frac{n+1}{n-1}}\\
&\leq(-\log\delta)^2\delta^{-3\epsilon^3+2\epsilon\left(-\frac{1}{2}+\frac{\epsilon}{8}\right)}|\T|^{\frac{n}{n-1}}M_0^{2-\frac{n+1}{n-1}}\\
&=(-\log\delta)^2\delta^{\frac{1}{4}\epsilon^2-3\epsilon^3}\!\cdot\delta^{-\epsilon}|\T|^{\frac{n}{n-1}}\cdot M_0^{2-\frac{n+1}{n-1}},
\end{align*}
with $M_0^{2-\frac{n+1}{n-1}}\leq 1$ when $n=2\text{ or }3$, and \[(-\log\delta)^2\delta^{\frac{1}{4}\epsilon^2-3\epsilon^3}= (-\log\delta)^2\delta^{\frac{1}{4}\epsilon^2-\left(3+\frac{n+1}{n-1}\right)\epsilon^3}\!\cdot\delta^{\frac{n+1}{n-1}\epsilon^3}\ll r^{-\frac{n+1}{n-1}}.\]
This closes the induction in the case where $r<\delta^{-\epsilon^3}$ and $\alpha>\delta^{\frac{3}{2}\epsilon^3}$.
\end{enumerate}

\par Hereinafter, suppose $r\geq\delta^{-\epsilon^3}$. If $W\leq\delta^{-\epsilon^4}$, then $P_r(\T)\neq\varnothing$ only if $r\lesssim_nN_0W^{2n-2}$, in which case \begin{align*}
\left|P_r(\T)\right|\leq\left|P_{1+N_0}(\T)\right|\lesssim_n\left(W^{2(2n-2)}\right)^{\!2}\!\cdot N_0^2
&\sim_{n} |\T|^{\frac{n}{n-1}}\left(N_0W^{2(2n-2)}\right)^{\!\frac{n-2}{n-1}}\\
&\lesssim_{n,N_0}|\T|^{\frac{n}{n-1}}r^{-\frac{n+1}{n-1}}W^{4(n-2)+2(n+1)}\\
&\leq |\T|^{\frac{n}{n-1}}r^{-\frac{n+1}{n-1}}\delta^{-(6n-6)\epsilon^4}\\
&\ll |\T|^{\frac{n}{n-1}}r^{-\frac{n+1}{n-1}}\delta^{-\epsilon}.
\end{align*}
So we will conveniently assume $W>\delta^{-\epsilon^4}$. Take $D:=\delta^{-\epsilon^4}$, and let $\Q$ be a covering of $B_{\C^n}(\boldsymbol{0},2)$ consisting of $D\delta$-balls $\q$ such that no two of the balls $\frac{1}{2}\q$ overlap\footnote{Basically, we find a packing of the set $B_{\C^n}(\boldsymbol{0},2)$ by $\frac{1}{2}D\delta$-balls and then double the radius of each ball to obtain a covering. Choosing such a covering will help us avoid ambiguity about where the chopped tubes come from later in the proof.}. By our induction hypothesis, we may assume without loss of generality that $|P|\sim|P_r(\T)|$, where $P:=P_r(\T)\setminus P_{2r}(\T)$. For any pair of intersecting~$T$ and~$\q$, the intersection $T\cap\q$ is contained in a complex tube of length $2D\delta$ and radius $\delta$, which we call $\mathring{T}$. Fix a maximal subset $\mathring{\mathbb{T}}_{\max}\subseteq\mathring{\mathbb{T}}$ consisting of essentially distinct chopped tubes that maximizes $I\!\left(P,\mathring{\mathbb{T}}_{\max}\right)$. Then in particular, \[I\!\left(P,\mathring{\mathbb{T}}_{\max}\right)\gtrsim\left(W^{-1}D\right)^{2(n-1)}I(P,\mathbb{T}).\]

\par Let $\mathring{\T}_{\q,H}$ be the set of chopped complex tubes $\mathring{T}\in\mathring{\T}_{\max}$ which are essentially contained in $\q$ and $\sim H$ of the complex tubes $T\in\T$. Because of the spacing condition, $\mathring{\T}_{\q,H}\neq\varnothing$ only if $H\lesssim (WD^{-1})^{2(n-1)}$. As in the proof of \Cref{thm:1st_main}, we first use dyadic pigeonholing to pick a particular $H_0$ such that \[\sum_{\q\in\Q}H_0\cdot\left|I\!\left(P_{\q},\mathring{\T}_{\q,H_0}\right)\right|\gtrsim_n\left(-\log\delta\right)^{-1}|I(P,\T)|,\] 
where $P_{\q}$ is the collection of $\delta$-balls $p\in P$ that are essentially contained in $\q$. 
Let $P_{\q,E}$ be the set of $\delta$-balls in $P$ which are essentially contained in $\q$ and $\sim E$ of the chopped tubes $\mathring{T}\in\mathring{\T}_{\q,H_0}$. Again, by dyadic pigeonholing, we can find a particular $E_0$ such that \[
\sum_{\q\in\Q}H_0\cdot\left|I\!\left(P_{\q,E_0},\mathring{\mathbb{T}}_{\q,H_0}\right)\right|\gtrsim_n\left(-\log\delta\right)^{-2}|I(P,\T)|.\]
By the definition of $P_{\q,E}$, we have $\left|I\!\left(P_{\q,E_0},\mathring{\mathbb{T}}_{\q,H_0}\right)\right|\sim E_0|P_{\q,E_0}|$; and by the definition of $P$, $|I(P,\T)|\sim r|P|$. It then follows from our choice of $E_0$ that \begin{equation}\label{eq:consequence_of_2nd_pigeonhole_large_r}
\sum_{\q}|P_{\q,E_0}|\gtrsim_n(-\log\delta)^{-2}\frac{r}{H_0 E_0}|P|.
\end{equation}
On the other hand, \[\sum_{\q}|P_{\q,E_0}|\leq\sum_{\q}|P_{\q}|\leq 2|P|.\]
Hence, 
\begin{equation}\label{eq:H_0E_0>r}
H_0 E_0\gtrsim|\log\delta|^{-2}r.
\end{equation}
Further, since each $\q\in P_{\q,E_0}$ is essentially contained in $\sim E_0$ of the chopped tubes $\mathring{T}\in\T_{\q,H_0}$, and each $\mathring{T}\in\T_{\q,H_0}$ is contained in $\sim H_0$ of the complex tubes $T\in\T$, each $\q$ is essentially contained in $\sim H_0 E_0$ of the complex tubes $T\in\T$. Recalling the definition $P=P_r(\T)\setminus P_{2r}(\T)$, we deduce that
\begin{equation}\label{eq:H_0E_0<r}
H_0 E_0\lesssim 2r\sim r.
\end{equation}
Thus, by \eqref{eq:consequence_of_2nd_pigeonhole_large_r}, \[|P|\lesssim_n|\log\delta|^2\sum_{\q\in\Q}\left|P_{\q,E_0}\right|\leq|\log\delta|^2\left(\sum_{\q\text{ thin}}\left|P_{\q,E_0}\right|+\sum_{\q\text{ thick}}\left|P_{\q,E_0}\right|\right).\]
Now we apply \Cref{lem:heavy_ball_lemma} to each pair of $P_{\q,E_0}$ and $\mathbullet{\T}_{\q,H_0}$, rescaled and translated so that $\q$ becomes the unit ball.
\begin{enumerate}
\item[(c)] For each $\q$ falling into the thin case, \[\left|P_{\q,E_0}\right|\lesssim D^{\frac{\epsilon}{25n}-2\epsilon^3}E_0^{-2}D^{2(n-1)}\left|\mathbullet{\T}_{\q,H_0}\right|.\]
So, if the thin case dominates, then \[|P|\lesssim_n|\log\delta|^2\sum_{\q\text{ thin}}\left|P_{\q,E_0}\right|\lesssim|\log\delta|^2\cdot D^{\frac{\epsilon}{25n}-2\epsilon^3}E_0^{-2}D^{2(n-1)}\sum_{\q}\left|\mathbullet{\T}_{\q,H_0}\right|.\]
To estimate the term $\sum_{\q}\left|\mathbullet{\T}_{\q,H_0}\right|$, let $\{\sigma\}\subseteq\C\P^{n-1}$ be a maximal set of $D^{-1}$-separated directions, and for each $\sigma$, let $\{\square_{\sigma}\}$ be a minimal covering of $B_{\C^n}(\boldsymbol{0},1)$ consisting of complex $D^{-1}$-tubes $\square_\sigma$ in the direction $\sigma$. Let $\T_{\square_\sigma}$ denote the collection of complex tubes $T\in\T$ that are essentially contained in $\square_\sigma$. Each chopped complex tube $\mathbullet{T}\in\mathbullet{\T}_{\q,H_0}$ intersects essentially $\sim H_0$ complex tubes of $\T$, all of which lie essentially in the same $\square_\sigma$. Rescale each $\square_\sigma$ to essentially a unit ball, and let $\widetilde{\T}_{\square_\sigma}$ be the resulting set of complex $\widetilde{\delta}$-tubes, where $\widetilde{\delta}:=D\delta$. Then, $\widetilde{\T}_{\square_\sigma}$ meets the spacing condition with $\widetilde{W}:=WD^{-1}$. Moreover, \[\sum_{\q}\left|\mathbullet{\T}_{\q,H_0}\right|\lesssim\sum_{\sigma}\sum_{\square_\sigma}\left|P_{\widetilde{r}}\left(\widetilde{\T}_{\square_\sigma}\right)\right|,\]
where $\widetilde{r}\sim H_0$ (or, more precisely, $\widetilde{r}$ is a constant fraction of $H_0$). Since \[\widetilde{\delta}=D\delta=\delta^{1-\epsilon^4}\gg \delta^{1-\epsilon^{10}}, \]
we can apply our induction hypothesis and get \begin{align*}
\left|P_{\widetilde{r}}\left(\widetilde{\T}_{\square_\sigma}\right)\right|\lesssim\widetilde{\delta}^{-\epsilon}\left|\widetilde{\T}_{\square_\sigma}\right|^{\frac{n}{n-1}}\widetilde{r}^{-\frac{n+1}{n-1}}&\sim(D\delta)^{-\epsilon}\left(WD^{-1}\right)^{4(n-1)\cdot\frac{n}{n-1}}H_0^{-\frac{n+1}{n-1}}\\
&\sim(D\delta)^{-\epsilon}D^{-4n}|\T|^{\frac{n}{n-1}}H_0^{-\frac{n+1}{n-1}},
\end{align*}
which implies \begin{align*}
\sum_{\q}\left|\mathbullet{\T}_{\q,H_0}\right|&\lesssim D^{4(n-1)}\cdot(D\delta)^{-\epsilon}D^{-4n}|\T|^{\frac{n}{n-1}}H_0^{-\frac{n+1}{n-1}}\\
&=D^{-4}(D\delta)^{-\epsilon}|\T|^{\frac{n}{n-1}}H_0^{-\frac{n+1}{n-1}},
\end{align*}
and consequently \begin{align*}
|P_r(\T)|&\lesssim|\log\delta|^2\cdot D^{2n-6+\frac{\epsilon}{25n}-2\epsilon^3}(D\delta)^{-\epsilon}E_0^{-2}H_0^{-\frac{n+1}{n-1}}|\T|^{\frac{n}{n-1}}\\
&\lesssim|\log\delta|^{2+2\frac{n+1}{n-1}}\delta^{-\epsilon} D^{2n-6+\left(\frac{1}{25n}-1\right)\epsilon-2\epsilon^3}E_0^{-2+\frac{n+1}{n-1}}r^{-\frac{n+1}{n-1}}|\T|^{\frac{n}{n-1}},
\end{align*}
with the second inequality following from \eqref{eq:H_0E_0>r}. When $-2+\frac{n+1}{n-1}\geq0$, i.e. when $n=2\text{ or }3$, \[E_0^{-2+\frac{n+1}{n-1}}\lesssim \left(D^{2(n-1)}\right)^{-2+\frac{n+1}{n-1}},\]
and hence \begin{align*}
|P_r(\T)|&\lesssim|\log\delta|^{\frac{4n}{n-1}} D^{\left(\frac{1}{25n}-1\right)\epsilon-2\epsilon^3}\cdot\delta^{-\epsilon}r^{-\frac{n+1}{n-1}}|\T|^{\frac{n}{n-1}},
\end{align*}
where $|\log\delta|^{\frac{4n}{n-1}} D^{\left(\frac{1}{25n}-1\right)\epsilon-2\epsilon^3}\ll 1$.

\item[(d)] For each $\q$ falling into the thick case, there exists a collection $\widetilde{P}_{\q}$ of finitely overlapping balls $\widetilde{p}$ of width $\sim D^{\frac{\epsilon}{100n}}\delta$ such that the union $\bigcup_{\widetilde{p}\in\widetilde{P}_{\q}}\widetilde{p}$ contains a $\gtrsim (\log D)^{-1}$ fraction of the balls $p\in P_{\q,E_0}$, and each ball $\widetilde{p}$ intersects $\gtrsim E_0D^{\frac{n-1}{50n}\epsilon-2n\epsilon^3}$ of the complex tubes $\mathbullet{T}\in\mathbullet{\T}_{\q,H_0}$. Set \[\widetilde{P}:=\bigcup_{\q\text{ thick}}\widetilde{P}_{\q}.\]
Then, each $\widetilde{p}\in\widetilde{P}$ intersects $\gtrsim N_0E_0 D^{\frac{n-1}{50n}\epsilon-2n\epsilon^3}$ of the complex tubes $T\in\T$. To apply the induction hypothesis, we fatten each complex $\delta$-tube $T\in\T$ into a complex tube $\widetilde{T}$ of width $\sim D^{\frac{\epsilon}{100n}}\delta$, and call the collection of all these complex tubes $\widetilde{\T}$. Then, $\widetilde{P}\subseteq P_{\widetilde{r}}(\widetilde{\T})$ for $\widetilde{r}$ a sufficiently small constant fraction of $H_0E_0 D^{\frac{n-1}{50n}\epsilon-2n\epsilon^3}$, and for such $\widetilde{r}$ we have \begin{align*}
\widetilde{r}\sim H_0E_0 D^{\frac{n-1}{50n}\epsilon-2n\epsilon^3}
&\gtrsim(\log\delta)^{-2}rD^{\frac{n-1}{50n}\epsilon-2n\epsilon^3}\\
&\gg r \\
&\geq \max\left\{C_{\epsilon}\widetilde{\delta}^{2(n-1)-\epsilon}\left|\widetilde{\T}\right|,1+N_0\right\}.
\end{align*}
The collection $\widetilde{\T}$ meets the spacing condition with $\widetilde{\delta}:= C D^{1+\frac{\epsilon}{100n}}\delta$ and $\widetilde{W}:=W$, where \[\widetilde{W}=W\lesssim\delta^{-\frac{1}{2}-\frac{1}{4(n-1)}+\frac{\epsilon}{4(n-1)}}\ll\delta^{-1+\epsilon^4\left(1+\frac{\epsilon}{100n}\right)}\sim\widetilde{\delta}^{-1}.\]
Hence, \begin{align*}
\sum_{\q\text{ thick}}\left|\widetilde{P}_{\q}\right|\leq \left|P_{\widetilde{r}}\left(\widetilde{\T}\right)\right|&\lesssim\widetilde{\delta}^{-\epsilon}\widetilde{r}^{-\frac{n+1}{n-1}}\left|\widetilde{\T}\right|^{\frac{n}{n-1}}\\
&\lesssim \delta^{\epsilon-\epsilon^5\left(1+\frac{\epsilon}{100n}\right)}|\log\delta|^{\frac{2(n+1)}{n-1}}\cdot\delta^{-\epsilon}r^{-\frac{n+1}{n-1}}|\T|^{\frac{n}{n-1}},
\end{align*}
and \begin{align*}
|P_r(\T)|\lesssim (\log \delta)^2\sum_{\q\text{ thick}}\left|P_{\q,E_0}\right|
&\lesssim (\log \delta)^2(\log D)\sum_{\q\text{ thick}}\left|\widetilde{P}_{\q}\right|\\
&\lesssim \delta^{\epsilon-\epsilon^5\left(1+\frac{\epsilon}{100n}\right)}\left|\log\delta\right|^{3+\frac{2(n+1)}{n-1}}\cdot\delta^{-\epsilon}r^{-\frac{n+1}{n-1}}|\T|^{\frac{n}{n-1}}\\
&\ll \delta^{-\epsilon}r^{-\frac{n+1}{n-1}}|\T|^{\frac{n}{n-1}}.
\end{align*}
\end{enumerate}
The induction is now complete.
\end{proof}
\section{Application to a variant of Falconer's distance set problem}\label{sect_app}

In this section, we investigate the following question:
\begin{question}\label{quest:diff_set_size}
Fix $2<s<4$. Let $E$ be a collection of $\sim N\delta^{-s}$ many\footnote{say $\frac{1}{2}\left|B_{\C^2}(\vec{0},1)\right|N\delta^{-s}<|E|<2\left|B_{\C^2}(\vec{0},1)\right|N\delta^{-s}$} $\delta$-balls in $B_{\C^2}(\vec{0},1)$ with at most $N$ $\delta$-balls in each ball of radius $\delta^{s/4}$. How many finitely overlapping $\delta$-balls are needed to cover the difference set \[\Delta(E):=\left\{(x_1-x_2)^2+(y_1-y_2)^2\in\C:\Vec{p}_1=(x_1,y_1),\Vec{p}_2=(x_2,y_2)\in\bigcup E\right\}?\]
\end{question}
Our work will build up to a proof of \Cref{cor:intro_diff_set_size}, which we have restated below as \Cref{thm:diff_set_size} for convenience.
\begin{theorem}
\label{thm:diff_set_size}
The number of distinct $\delta$-balls needed to cover the difference set \[\Delta(E):=\left\{(x_1-x_2)^2+(y_1-y_2)^2\in\C:\Vec{p}_1=(x_1,y_1),\Vec{p}_2=(x_2,y_2)\in\bigcup E\right\}\]
is $\gtrsim_{\epsilon,s,N}\delta^{-2+\epsilon}$ for all $\epsilon>0$. 
\end{theorem}

We shall emphasize that our complex analogue of the distance set is the set of squared sums of differences, rather than the set of squared sums of the norms of the differences.
\begin{definition}
Given any pair of points $\Vec{p}_1=(x_1,y_1)$ and $\Vec{p}_2=(x_2,y_2)$ in $\C^2$, define \[\Delta\left(\Vec{p}_1,\Vec{p}_2\right):=(x_1-x_2)^2+(y_1-y_2)^2.\]
\end{definition}

To build a connection between the difference set problem and the incidence estimates, we introduce an auxiliary line for each pair of points in $\C^2$.

\begin{definition}
Given any pair of points $\Vec{p}_1=(x_1,y_1)$ and $\Vec{p}_2=(x_2,y_2)$ in $\C^2$, define the auxiliary line
\[l_{\Vec{p}_1,\Vec{p}_2}:=\left\{\left(\tfrac{x_1+x_2}{2}+\tfrac{y_1-y_2}{2}z,\tfrac{y_1+y_2}{2}-\tfrac{x_1-x_2}{2}z,z\right):z\in\C\right\}.
\]
To simplify future notation, let's write \[\vv_{\Vec{p}_1,\Vec{p}_2}:=\left(\tfrac{y_1-y_2}{2},-\tfrac{x_1-x_2}{2},1\right).\]
Then, \[l_{\Vec{p}_1,\Vec{p}_2}= \left \{ \left(\tfrac{\Vec{p}_1+\Vec{p}_2}{2},0\right)+z\vv_{\Vec{p}_1,\Vec{p}_2}: z \in \C \right \}.
\] 
\end{definition}

The intuition behind this definition comes from the Elekes-Sharir framework used in the distinct distance problem in $\R^2$ (see e.g. \cite{ErdosDistinctDist_Real}): Over $\R$ instead of $\C$, the orthogonal projection of $l_{\Vec{p}_1,\Vec{p}_2}$ onto the first two coordinates is the perpendicular bisector of the segment between the points $\Vec{p}_1$ and $\Vec{p}_2$ in $\R^2$, and the line $l_{\Vec{p}_1,\Vec{p}_2}$ itself parametrizes all the rigid motions mapping $\Vec{p}_1$ to $\Vec{p}_2$. Two things to note: \begin{enumerate}
\item For any four points $\Vec{p}_j=(x_j,y_j)\in\C^2$, $j=1,2,3,4$, the auxiliary lines $l_{\Vec{p}_1,\Vec{p}_3}$ and $l_{\Vec{p}_2,\Vec{p}_4}$ intersect if and only if \[\Delta\left(\Vec{p}_1,\Vec{p}_2\right) = \Delta\left(\Vec{p}_3,\Vec{p}_4\right).\]
We will prove a more quantitative version of this statement in \Cref{prop:quantitative_auxline_intersection}.
\item For any two distinct points $\Vec{p}_1, \Vec{p}_2\in\C^2$, the auxiliary lines $l_{\Vec{p}_1,\Vec{p}_2}$ and $l_{\Vec{p}_2,\Vec{p}_1}$ are different.
\end{enumerate}

\begin{definition}
\label{blackboardT}
Given a collection $E$ of $\delta$-balls as described in \Cref{quest:diff_set_size}, we can choose two balls $B'$ and $B''$ of radius $C_1\leq 0.01$ with centres at least $C_2\geq 1.2$ apart such that each of the two subcollections $E':=\{q\in E:q\subseteq B'\}$ and $E'':=\{q\in E:q\subseteq B''\}$ contains a $\gtrsim1$ fraction of the balls in $E$. 
\par Define $\T$ to be the collection of complex almost $\delta$-tubes \[\left\{T_{q_1,q_2}:(q_1,q_2)\in (E'\times E'')\cup(E''\times E')\right\},\]
where \[T_{q_1,q_2}:=\left(\bigcup_{(\Vec{p}_1,\Vec{p}_2)\in q_1\times q_2}l_{\Vec{p}_1,\Vec{p}_2}\right)\cap B_{\C^3}(\boldsymbol{0},1).\]
\end{definition}

The following proposition justifies our choice to call the auxiliary objects $T_{q_1,q_2}$ ``almost" tubes. 

\begin{proposition}
Let $q_1$ and $q_2$ be any two $\delta$-balls contained in $B_{\C^2}(\vec{0},1)$, and let $\Vec{\sigma}_1=\left(x_{\Vec{\sigma}_1},y_{\Vec{\sigma}_1}\right)$ and $\Vec{\sigma}_2=\left(x_{\Vec{\sigma}_2},y_{\Vec{\sigma}_2}\right)$ denote the centres of $q_1$ and $q_2$, respectively. Then, \begin{enumerate}
\item\label{itm:almost_tube_contained_in_larger_tube}$T_{q_1,q_2}\subseteq \nb_{2\delta}\left(l_{\Vec{\sigma}_1,\Vec{\sigma}_2}\right)\cap B_{\C^3}(\boldsymbol{0},1)$; and
\item\label{itm:almost_tube_contains_smaller_tube} $T_{q_1,q_2}\supseteq \nb_{\delta/2}\left(l_{\Vec{\sigma}_1,\Vec{\sigma}_2}\right)\cap B_{\C^3}(\boldsymbol{0},1)$.
\end{enumerate}
\end{proposition}

The proof of this proposition involves nothing more than basic geometry and algebra, and is included only for the sake of completeness. Readers may trust their instinct and skip to the next page.

\begin{proof}
To prove \cref{itm:almost_tube_contained_in_larger_tube}, consider any two points $\Vec{p}_1=(x_1,y_1)\in q_1$ and $\Vec{p}_2=(x_2,y_2)\in q_2$, and any scalar $z\in\C$ such that \[\left(\frac{x_1+x_2}{2}+z\frac{y_1-y_2}{2},\frac{y_1+y_2}{2}-z\frac{x_1-x_2}{2},z\right)\in B_{\C^3}(\boldsymbol{0},1).\]
Then clearly $|z|<1$. And triangle inequality gives \begin{align*}
&\left\|\left(\frac{x_1+x_2}{2}+z\frac{y_1-y_2}{2},\frac{y_1+y_2}{2}-z\frac{x_1-x_2}{2},z\right)\right.\\
&\qquad\qquad\left.-\left(\frac{x_{\Vec{\sigma}_1}+x_{\Vec{\sigma}_2}}{2}+z\frac{y_{\Vec{\sigma}_1}-y_{\Vec{\sigma}_2}}{2},\frac{y_{\Vec{\sigma}_1}+y_{\Vec{\sigma}_2}}{2}-z\frac{x_{\Vec{\sigma}_1}-x_{\Vec{\sigma}_2}}{2},z\right)\right\|\\
&\leq\frac{\left\|\Vec{p}_1+\Vec{p_2}-\Vec{\sigma}_1-\Vec{\sigma}_2\right\|}{2}+|z|\frac{\left\|\Vec{p}_1-\Vec{p_2}-\Vec{\sigma}_1+\Vec{\sigma}_2\right\|}{2}\\
&\leq\frac{\left\|\Vec{p}_1-\Vec{\sigma}_1\right\|+\left\|\Vec{p}_2-\Vec{\sigma}_2\right\|}{2}+\frac{\left\|\Vec{p}_1-\Vec{\sigma}_1\right\|+\left\|\Vec{p}_2-\Vec{\sigma}_2\right\|}{2}\\
&<2\delta.
\end{align*}
\par To prove \cref{itm:almost_tube_contains_smaller_tube}, consider any scalar $z\in\C$ and any point $(a,b,c)\in B_{\C^3}\left(\boldsymbol{0},\frac{\delta}{2}\right)$ such that \[\left(\frac{x_{\Vec{\sigma}_1}+x_{\Vec{\sigma}_2}}{2}+z\frac{y_{\Vec{\sigma}_1}-y_{\Vec{\sigma}_2}}{2}+a,\frac{y_{\Vec{\sigma}_1}+y_{\Vec{\sigma}_2}}{2}-z\frac{x_{\Vec{\sigma}_1}-x_{\Vec{\sigma}_2}}{2}+b,z+c\right)\in B_{\C^2}(\boldsymbol{0},1).\]
We would like to solve for the following system under the additional requirement $\left(r_1,s_1\right),\left(r_2,s_2\right)\in B_{\C^2}\left(\Vec{0},\delta\right)$:
\[\begin{dcases}
\frac{\left(x_{\Vec{\sigma}_1}+r_1\right)+\left(x_{\Vec{\sigma}_2}+r_2\right)}{2}+(z+c)\frac{\left(y_{\Vec{\sigma}_1}+s_1\right)+\left(y_{\Vec{\sigma}_2}+s_2\right)}{2}&\\
&\hspace{-8em}= \frac{x_{\Vec{\sigma}_1}+x_{\Vec{\sigma}_2}}{2}+z\frac{y_{\Vec{\sigma}_1}-y_{\Vec{\sigma}_2}}{2}+a\\
\frac{\left(y_{\Vec{\sigma}_1}+s_1\right)+\left(y_{\Vec{\sigma}_2}+s_2\right)}{2}+(z+c)\frac{\left(x_{\Vec{\sigma}_1}+r_1\right)+\left(x_{\Vec{\sigma}_2}+r_2\right)}{2}&\\
&\hspace{-8em}= \frac{x_{\Vec{\sigma}_1}+x_{\Vec{\sigma}_2}}{2}+z\frac{y_{\Vec{\sigma}_1}-y_{\Vec{\sigma}_2}}{2}+b
\end{dcases}.\]
After rearrangement, the above system is equivalent to \[\begin{dcases}
\frac{r_1+r_2}{2}+(z+c)\frac{s_1-s_2}{2}=a-c\frac{y_{\Vec{\sigma}_1}-y_{\Vec{\sigma}_2}}{2}\\
\frac{s_1+s_2}{2}+(z+c)\frac{r_1-r_2}{2}=a-c\frac{x_{\Vec{\sigma}_1}-x_{\Vec{\sigma}_2}}{2}
\end{dcases}.\]
Forcing $r_1=r_2$ and $s_1=s_2$, we find a solution \[\begin{dcases}
r_1=r_2=a-c\frac{y_{\Vec{\sigma}_1}-y_{\Vec{\sigma}_2}}{2}\\
s_1=s_2=b+c\frac{x_{\Vec{\sigma}_1}-x_{\Vec{\sigma}_2}}{2}
\end{dcases},\]
which has the property that \begin{align*}
\left\|\left(r_1,s_1\right)\right\|=\left\|\left(r_2,s_2\right)\right\|&=\left\|(a,b)+c\left(-\frac{y_{\Vec{\sigma}_1}-y_{\Vec{\sigma}_2}}{2},\frac{x_{\Vec{\sigma}_1}-x_{\Vec{\sigma}_2}}{2}\right)\right\|\\
&\leq\|(a,b)\|+|c|\frac{\left\|\Vec{\sigma}_1-\Vec{\sigma}_2\right\|}{2}\\
&<\frac{\delta}{2}+\frac{\delta}{2}\cdot 1\\
&=\delta \qedhere
\end{align*}
\end{proof}

Next, define $Q$ to be the set of all quadruples $(q_1,q_2,q_3,q_4)\in E'\times E''\times E''\times E'$ such that \[\delta>\dist\left(\Delta(q_1,q_2),\Delta(q_3,q_4)\right):=\min_{(\Vec{p}_1,\Vec{p}_2,\Vec{p}_3,\Vec{p}_4)\in q_1\times q_2\times q_3\times q_4}\left|\Delta(\Vec{p}_1,\Vec{p}_2 )-\Delta(\Vec{p}_3,\Vec{p}_4)\right|.\]
The following two proposition will then allow us to resolve \Cref{quest:diff_set_size}.

\begin{proposition}
\label{prop:quantitative_auxline_intersection}
If a quadruple of $\delta$-balls $(q_1,q_2,q_3,q_4)\in E'\times E''\times E''\times E'$ is in $Q$, then the corresponding complex $\delta$-tubes $T_{q_1,q_3}$ and $T_{q_2,q_4}$ intersect in a $\delta$-ball.
\end{proposition}

\begin{proposition}
\label{prop:spacing_check}
The collection $\T$ meets the spacing condition of \Cref{cor:2nd-thm}, with $W \sim \delta^{-s/4}$ and $N_0=N^2$. 
\end{proposition}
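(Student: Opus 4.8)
The plan is to check the hypothesis directly. Fix a maximal family $\mathcal{F}$ of essentially distinct complex tubes of radius $\delta^{s/4}$ and length $1$ essentially contained in $B_{\C^3}(\boldsymbol{0},1)$ — these are the ``$W^{-1}$-tubes'', taking $W=\delta^{-s/4}$, so that $1\le W\le\delta^{-1}$. I will show each $\tau\in\mathcal{F}$ essentially contains $\lesssim N^2$ of the tubes $T_{q_1,q_2}\in\T$, which is the spacing condition of \Cref{thm:2nd_main} — more precisely, of the Remark following it — with $H_0\sim N^2$. The mechanism: such containment forces both the sum and the difference of the centres of $q_1$ and $q_2$ to lie within $\sim\delta^{s/4}$ of fixed points depending only on $\tau$, hence forces each centre into a $\sim\delta^{s/4}$-ball depending only on $\tau$; the non-concentration hypothesis on $E$ then bounds the number of admissible $q_1$, and of admissible $q_2$, by $\lesssim N$ each.

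First I would make $T_{q_1,q_2}$ explicit. Write $\boldsymbol{c}_j$ for the centre of the $\delta$-ball $q_j$. As $\boldsymbol{p}_1$ ranges over $q_1$ and $\boldsymbol{p}_2$ over $q_2$, the base point $\bigl(\tfrac{\boldsymbol{p}_1+\boldsymbol{p}_2}{2},0\bigr)$ of $l_{\boldsymbol{p}_1,\boldsymbol{p}_2}$ varies by $O(\delta)$ and its direction by $O(\delta)$; so inside $B_{\C^3}(\boldsymbol{0},1)$ the set $T_{q_1,q_2}$ agrees, up to absolute constants in its radius, with the $\delta$-tube $T$ of axis $l_{\boldsymbol{c}_1,\boldsymbol{c}_2}$. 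Because $\boldsymbol{c}_1,\boldsymbol{c}_2\in B_{\C^2}(\boldsymbol{0},1)$, the direction of $l_{\boldsymbol{c}_1,\boldsymbol{c}_2}$ has third coordinate $\sim 1$ and its base point lies at a definite distance inside $B_{\C^2}(\boldsymbol{0},1)$; hence $T$ has length $\sim 1$ and its axis is a $\C$-affine graph $z\mapsto(a_T(z),b_T(z),z)$ for $z$ ranging over a region of size $\sim1$ containing $0$, with $(a_T(0),b_T(0))=\tfrac{\boldsymbol{c}_1+\boldsymbol{c}_2}{2}$ and with constant derivative $(a_T',b_T')=\Lambda(\boldsymbol{c}_1-\boldsymbol{c}_2)$, where $\Lambda\colon(u,v)\mapsto(\tfrac v2,-\tfrac u2)$ is an invertible $\C$-linear map of $\C^2$.

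Now fix $\tau\in\mathcal F$ and suppose $\tau$ essentially contains $T$. Since a fixed fraction of the length-$\sim1$ tube $T$ lies inside $\tau$, the tube $\tau$ is likewise comparable to a $\C$-affine graph $z\mapsto(a_\tau(z),b_\tau(z),z)$, and $|a_T(z)-a_\tau(z)|+|b_T(z)-b_\tau(z)|\lesssim\delta^{s/4}$ on a region of size $\gtrsim1$ in $z$. A $\C$-affine function that is $O(\delta^{s/4})$ on a region of size $\gtrsim1$ has derivative $O(\delta^{s/4})$ and is $O(\delta^{s/4})$ at any point within distance $O(1)$ of that region; evaluating at $z=0$ (which is within distance $1$ of the region, as everything sits in $B_{\C^3}(\boldsymbol{0},1)$) gives $\bigl|\tfrac{\boldsymbol{c}_1+\boldsymbol{c}_2}{2}-(a_\tau(0),b_\tau(0))\bigr|\lesssim\delta^{s/4}$ and $\bigl|\Lambda(\boldsymbol{c}_1-\boldsymbol{c}_2)-(a_\tau',b_\tau')\bigr|\lesssim\delta^{s/4}$, with both $(a_\tau(0),b_\tau(0))$ and $(a_\tau',b_\tau')$ depending only on $\tau$. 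Applying $\Lambda^{-1}$ to the second inequality, $\boldsymbol{c}_1-\boldsymbol{c}_2$ lies in a ball $D_-$ of radius $\sim\delta^{s/4}$ determined by $\tau$, while the first puts $\boldsymbol{c}_1+\boldsymbol{c}_2$ in a ball $D_+$ of radius $\sim\delta^{s/4}$ determined by $\tau$; adding and subtracting, $\boldsymbol{c}_1\in\tfrac12(D_++D_-)$ and $\boldsymbol{c}_2\in\tfrac12(D_+-D_-)$, each a $\sim\delta^{s/4}$-ball depending only on $\tau$. By hypothesis each ball of radius $\delta^{s/4}$ contains at most $N$ of the $\delta$-balls of $E$, and a ball of radius $C\delta^{s/4}$ is covered by $O_C(1)$ balls of radius $\delta^{s/4}$; hence there are $\lesssim N$ admissible $q_1\in E$ and $\lesssim N$ admissible $q_2\in E$, so $\tau$ essentially contains $\lesssim N^2$ of the $T_{q_1,q_2}$. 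Since $\T$ is indexed by pairs in $(E_1\times E_2)\cup(E_2\times E_1)\subseteq E\times E$, this is exactly the desired bound.

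The step I expect to be the main obstacle is the passage, at the start of the third paragraph, from the measure-theoretic definition of ``$\tau$ essentially contains $T$'' to the two pointwise conclusions that $T$'s derivative and its axis value at $z=0$ are pinned to scale $\delta^{s/4}$: one must check that a constant-fraction overlap of the length-$\sim1$ tube $T$ with $\tau$ forces $\tau$ to be comparable to a graph over a $z$-region of size $\gtrsim1$ and forces the transversal deviation of the two axes to be $O(\delta^{s/4})$ there. Everything after that — evaluating a $\C$-affine map off its small region, and the ``sum and difference'' linear algebra — is routine, and the only geometric input needed, namely that each $T_{q_1,q_2}$ genuinely has length $\sim1$, is precisely what the prescribed placement of $B_1$ and $B_2$ inside $B_{\C^2}(\boldsymbol{0},1)$ provides.
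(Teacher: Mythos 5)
Your proof is correct, and it takes a genuinely different route from the paper's. The paper proves the proposition via a separation claim plus a coloring argument: it first shows (Claim 5.3) that for two subsets $E'$, $E''$ of $E$, each containing at most one $\delta$-ball per $\delta^{s/4}$-ball, a $W^{-1}$-tube can contain at most one tube $T_{q_1,q_2}$ with $q_1\in E'$, $q_2\in E''$; the claim is established in the contrapositive by writing both tubes as images of explicit parametrizations $\boldsymbol{G}_{1,3},\boldsymbol{G}_{2,4}\colon\R^2\to\R^6$, invoking the parallelogram law for $\|(\vec p_1+\vec p_3)-(\vec p_2+\vec p_4)\|$ and $\|(\vec p_1-\vec p_3)-(\vec p_2-\vec p_4)\|$, and splitting into two cases according to which of these dominates to exhibit a point where the two axes are $\gtrsim\delta^{s/4}$ apart. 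The proposition then follows by coloring $E$ with $\sim N$ colors (one color per $\delta^{s/4}$-ball, roughly) and applying the claim to each of the $\sim N^2$ color pairs $\mathbb{T}_{j,k}$. You instead argue the direct implication: essential containment of $T_{q_1,q_2}$ in a $W^{-1}$-tube $\tau$ pins the $\C$-affine graph data $(a_T(0),b_T(0))=\tfrac{\boldsymbol{c}_1+\boldsymbol{c}_2}{2}$ and $(a_T',b_T')=\Lambda(\boldsymbol{c}_1-\boldsymbol{c}_2)$ to within $O(\delta^{s/4})$ of data depending only on $\tau$, and since $\Lambda$ is an invertible $\C$-linear map, $\boldsymbol{c}_1$ and $\boldsymbol{c}_2$ each land in an $O(\delta^{s/4})$-ball determined by $\tau$; the non-concentration hypothesis then gives $\lesssim N$ choices for each center and hence $\lesssim N^2$ tubes, with no coloring needed. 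The two proofs encode the same geometric input — the base point of $l_{\vec p_1,\vec p_2}$ records the sum of centers and its direction records the difference — but your packaging is shorter and avoids both the parallelogram-law case analysis and the auxiliary coloring, at the modest cost of having to be careful in converting essential containment into a pointwise $O(\delta^{s/4})$ bound on the axis offset (which you rightly flag, and which is routine since the $\delta$-tube $T$ has length $\sim 1$ and a constant fraction of its volume lies in $\tau$). You also correctly note the typo in the statement: the intended value is $W=\delta^{-s/4}$ so that $W^{-1}=\delta^{s/4}$ matches the non-concentration scale and $1\le W\le\delta^{-1}$.
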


\begin{remark}
Here, we set  $W \sim \delta^{-s/4}$ (rather than $W = \delta^{-s/4}$) to account for various implied constants that appear when proving that a spacing condition for the $\delta$-balls results in a spacing condition for the associated tubes.  Ultimately, we want to make $W^{-1}$ a small positive multiple of $\delta^{s/4}$ (e.g. by $1/1000$); thus we should think of $W$ as a multiple of $\delta^{-s/4}$ by a constant $> 1$.  One could alternatively read the proposition as the statement, ``there exists $C > 1$ so that if $W = C \delta^{-s/4}$ and $N_0 = N^2$, then the collection $\T$ meets the spacing condition of \Cref{cor:2nd-thm}."
\end{remark}

Below, we give a proof of \Cref{thm:diff_set_size} assuming both \Cref{prop:quantitative_auxline_intersection} and \Cref{prop:spacing_check}.  We later prove each of these propositions in a separate subsection.

\begin{proof}[Proof of \Cref{thm:diff_set_size}]
\par Assuming \Cref{prop:quantitative_auxline_intersection}, and counting the incidences, we have \[|Q|\lesssim\sum_{r\geq 2\text{ dyadic}}r^2\left|P_{r}(\T)\right|.\]
Assuming \Cref{prop:spacing_check}, we know from \Cref{thm:2nd_main} that for $r\geq\delta^{4-\epsilon}|\T|$, \[|P_r(\T)|\lesssim\delta^{-\epsilon}|\T|^{\frac{3}{2}}r^{-2}.\]
The same bound holds true for $2\leq r<\delta^{4-\epsilon}|\T|$ since $|\T|\sim\delta^{-2s}$ and there are at most $\sim\delta^{-6}$ essentially distinct $\delta$-balls in the unit ball $B_{\C^3}(\boldsymbol{0},1)$. Hence, \[|Q|\lesssim\left|\log_2\delta^{4}\right|\cdot\delta^{-\epsilon}|\T|^{\frac{3}{2}}\sim|\log\delta|\cdot\delta^{-\epsilon-3s}.\]
Finally, by the Cauchy-Schwarz inequality, the number of $\delta$-balls required to cover the difference set $\Delta(E',E'')\subseteq\Delta(E)$ has the following lower bound on it: \[\#\Delta(E',E'')\geq\frac{(\# E'\cdot\# E'')^2}{|Q|}\gtrsim|\log\delta|^{-1}\delta^{-s+\epsilon}\geq\delta^{-2+\epsilon}.\]
So we have proved $\Cref{thm:diff_set_size}$.
\end{proof}

\subsection{Proof of \Cref{prop:quantitative_auxline_intersection}}

Fix a quadruple of $\delta$-balls $q_1\times q_2\times q_3\times q_4\in Q$ and a quadruple of points $(\vec{p}_1,\vec{p}_2,\vec{p}_3,\vec{p}_4)\in q_1\times q_2\times q_3\times q_4$.

\begin{claim}
Given any $\vec{p}_1,\vec{p}_4\in B'$ and $\vec{p}_2,\vec{p}_3\in B''$, the angle between $l_{\vec{p}_1,\vec{p}_3}$ and $l_{\vec{p}_2,\vec{p}_4}$ is $\gtrsim 1$.
\end{claim}

\begin{proof}
We recall from \Cref{sect_def} the usual embedding $\iota:\C^n\to\R^{2n}$ splitting each complex number into its real and imaginary part, and note that $\iota$ commutes with addition and multiplication by real scalars. By the variational definition of the angle, all we need to show is that the angle between the two vectors $\iota\left(\vv_{\vec{p}_1,\vec{p}_3}\right)$ and $\iota\left(z\vv_{\vec{p}_2,\vec{p}_4}\right)$, which we denote by $\theta_z$, is $\gtrsim 1$ for all $z\in\C$ with $|z|=1$. The cosine law gives \begin{align*}
\cos\theta_z&=\frac{\left\|\iota\left(\vv_{\vec{p}_1,\vec{p}_3}\right)\right\|^2+\left\|\iota\left(z\vv_{\vec{p}_2,\vec{p}_4}\right)\right\|^2-\left\|\iota\left(\vv_{\vec{p}_1,\vec{p}_3}-z\vv_{\vec{p}_2,\vec{p}_4}\right)\right\|^2}{2\left\|\iota\left(\vv_{\vec{p}_1,\vec{p}_3}\right)\right\|\left\|\iota\left(z\vv_{\vec{p}_2,\vec{p}_4}\right)\right\|}\\
&=\frac{\left\|\vv_{\vec{p}_1,\vec{p}_3}\right\|^2+\left\|\vv_{\vec{p}_2,\vec{p}_4}\right\|^2-\left\|\vv_{\vec{p}_1,\vec{p}_3}-z\vv_{\vec{p}_2,\vec{p}_4}\right\|^2}{2\left\|\vv_{\vec{p}_1,\vec{p}_3}\right\|\left\|\vv_{\vec{p}_2,\vec{p}_4}\right\|}\\
&=\frac{\left(\left\|\frac{\vec{p}_1-\vec{p}_3}{2}\right\|^2+1\right)+\left(\left\|\frac{\vec{p}_2-\vec{p}_4}{2}\right\|^2+1\right)-\left(\left\|\frac{\vec{p}_1-\vec{p}_3}{2}-z\frac{\vec{p}_2-\vec{p}_4}{2}\right\|^2+|1-z|^2\right)}{2\sqrt{\left\|\frac{\vec{p}_1-\vec{p}_3}{2}\right\|^2+1}\sqrt{\left\|\frac{\vec{p}_2-\vec{p}_4}{2}\right\|^2+1}}\\
&<\frac{2\left(\left(\frac{C_2+2C_1}{2}\right)^2+1\right)-\left(\left\|\frac{\vec{p}_1-\vec{p}_3}{2}-z\frac{\vec{p}_2-\vec{p}_4}{2}\right\|^2+|1-z|^2\right)}{2\left(\left(\frac{C_2-2C_1}{2}\right)^2+1\right)}\\
&=1+\frac{4C_1C_2-\left(\left\|\frac{\vec{p}_1-\vec{p}_3}{2}-z\frac{\vec{p}_2-\vec{p}_4}{2}\right\|^2+|1-z|^2\right)}{2\left(\left(\frac{C_2-2C_1}{2}\right)^2+1\right)},
\end{align*}
so we just need to show that for some constant $\epsilon_1>0$,\[\frac{4C_1C_2-\left(\left\|\frac{\vec{p}_1-\vec{p}_3}{2}-z\frac{\vec{p}_2-\vec{p}_4}{2}\right\|^2+|1-z|^2\right)}{2\left(\left(\frac{C_2-2C_1}{2}\right)^2+1\right)}<-\epsilon_1,\]
or in other words, \[\left\|\frac{\vec{p}_1-\vec{p}_3}{2}-z\frac{\vec{p}_2-\vec{p}_4}{2}\right\|^2+|1-z|^2>4C_1C_2+2\epsilon_1\left(\left(\frac{C_2-2C_1}{2}\right)^2+1\right).\]
Since $C_2<2$ and $C_1\leq 0.01$, it suffices to show for some $\epsilon_1>0$\[\left\|\frac{\vec{p}_1-\vec{p}_3}{2}-z\frac{\vec{p}_2-\vec{p}_4}{2}\right\|^2+|1-z|^2>0.08+4\epsilon_1.\]
Let's assume to the contrary that there exists some $z\in\C$ with $|z|=1$ such that \[\left\|\frac{\vec{p}_1-\vec{p}_3}{2}-z\frac{\vec{p}_2-\vec{p}_4}{2}\right\|^2+|1-z|^2\leq 0.09.\]
Then, on one hand, 
\[|1-z|\leq 0.3;\]
while on the other hand, \begin{align*}
0.3&\geq\left\|\frac{\vec{p}_1-\vec{p}_3}{2}-z\frac{\vec{p}_2-\vec{p}_4}{2}\right\|\\
&=\left\|(1+z)\frac{\vec{p}_1-\vec{p}_3}{2}+z\frac{\left(\vec{p}_4-\vec{p}_1\right)-\left(\vec{p}_2-\vec{p}_3\right)}{2}\right\|\\
&\geq |1+z|\left\|\frac{\vec{p}_1-\vec{p}_3}{2}\right\|-\left\|\frac{\left(\vec{p}_4-\vec{p}_1\right)-\left(\vec{p}_2-\vec{p}_3\right)}{2}\right\|\\
&> \frac{C_2-2C_1}{2}|1+z|-2C_1\\
&\geq \frac{1}{2}|1+z|-0.02,
\end{align*}
which, after rearrangement, gives \[
0.64>|1+z|=|z-(-1)|.\]
But then by the triangle inequality,
\[2=|1-(-1)|\leq|1-z|+|z-(-1)|<0.3+0.64=0.94,\]
which is contradictory. 
\par To summarize, in order for $\theta_z$ to be small, the first two coordinates of $\vv_{\vec{p}_1,\vec{p}_3}$ and of $\vv_{\vec{p}_2,\vec{p}_4}$ force $z$ to be closer to $-1$ while the last coordinate of $\vv_{\vec{p}_1,\vec{p}_3}$ and of $\vv_{\vec{p}_2,\vec{p}_4}$ force $z$ to be closer to $1$; yet the two cannot be achieved at the same time; thus $\theta_z$ cannot be too small. In conclusion, \[\angle\left(l_{\vec{p}_1,\vec{p}_3},l_{\vec{p}_2,\vec{p}_4}\right)=\min_{z\in\C:|z|=1}\theta_z\geq\arccos\left(1-\frac{0.09-0.08}{4}\right)=\arccos\left(\frac{399}{400}\right).\hfill\qedhere\]
\end{proof}

Proving \Cref{prop:quantitative_auxline_intersection} now reduces to proving the following claim:

\begin{claim}
\label{deltaBallIntersection}
The distance between $l_{\vec{p}_1,\vec{p}_3}$ and $l_{\vec{p}_2,\vec{p}_4}$ is $<\delta/2$. Consequently, the complex tubes $T_{q_1,q_3}$ and $T_{q_2,q_4}$ intersect in (essentially) a $\delta$-ball in $B_{\C_n}\left(\boldsymbol{0},1\right)$
\end{claim}

Let's consider the mapping $A:\C^2\to l_{\vec{p}_1,\vec{p}_3}-l_{\vec{p}_2,\vec{p}_4}\subseteq\C^3$ given by \[\boldsymbol{A}\left(t,t'\right):=\left(\tfrac{\vec{p}_1+\vec{p}_3}{2}-\tfrac{\vec{p}_2+\vec{p}_4}{2},0\right)+t\vv_{\vec{p}_1,\vec{p}_3}-t'\vv_{\vec{p}_2,\vec{p}_4}.\]
Observe that \[\left|\left(\vec{p}_1-\vec{p}_3\right)-\left(\vec{p}_2-\vec{p}_4\right)\right|=\left|\left(\vec{p}_1+\vec{p}_4\right)-\left(\vec{p}_2+\vec{p}_3\right)\right|\geq 2C_2-4C_1.\]
If $\left|x_1-x_3-x_2+x_4\right|\geq\left|y_1-y_3-y_2+y_4\right|$, then $\left|x_1-x_3-x_2+x_4\right|\geq C_2-2C_1$, and, evaluating $\|\boldsymbol{A}\|$ at $t=t'=\frac{y_1+y_3-y_2-y_4}{x_1-x_3-x_2+x_4}$, we find that \[\left\|\boldsymbol{A}\left(t,t'\right)\right\|^2=\frac{\left|\Delta(\vec{p}_1,\vec{p}_2 )-\Delta(\vec{p}_3,\vec{p}_4)\right|^2}{\left(2|x_1-x_3-x_2+x_4|\right)^2}<\left(\frac{\delta}{2C_2-4C_1}\right)^{\!2}<\left(\frac{\delta}{2}\right)^{\!2},\]
with $|t|=\left|t'\right|\leq\frac{4C_1}{C_2-2C_1}<\frac{1}{25}$. If $\left|x_1-x_3-x_2+x_4\right|\leq\left|y_1-y_3-y_2+y_4 \right|$, then, evaluating $\|\boldsymbol{A}\|$ at $t=t'=\frac{x_1+x_3-x_2-x_4}{y_1-y_3-y_2+y_4}$, we again have \[\left\|\boldsymbol{A}\left(t,t'\right)\right\|<\frac{\delta}{2},\]
with $|t|=\left|t'\right|<\frac{1}{25}$. Thus, in either case, there exists a ball of radius $\frac{\delta}{4}$ which contains a point from $l_{\vec{p}_1,\vec{p}_3}$ and a point from $l_{\vec{p}_2,\vec{p}_4}$; moreover, this $\frac{\delta}{4}$-ball lies completely in the ball $B_{\C^3}\!\left(\boldsymbol{0},1\right)$ because
\[\left\|\tfrac{\vec{p}_1+\vec{p}_3}{2}\right\|,\left\|\tfrac{\vec{p}_2+\vec{p}_4}{2}\right\|<\sqrt{1-\left(\frac{C_2-2C_1}{2}\right)^2}<\frac{\sqrt{3}}{2},\]
and \[\left(\tfrac{\vec{p}_1+\vec{p}_3}{2},0\right)+tv_{\vec{p}_1,\vec{p}_3},\left(\tfrac{\vec{p}_2+\vec{p}_4}{2},0\right)+t'v_{\vec{p}_2,\vec{p}_4}<\frac{\sqrt{3}}{2}+\frac{1}{25}\cdot\sqrt{\left(\frac{2}{2}\right)^2+1}<0.93.\]
The $\delta$-ball with the same centre witnesses the incidence between the complex tubes $T_{q_1,q_3}$ and $T_{q_2,q_4}$. \qed

\subsection{Proof of \Cref{prop:spacing_check}}
Let $W^{-1} = c \delta^{s/4}$ for some small constant $c \in (0,1)$ to be determined.
Let $R$ be a complex $W^{-1}$-tube in $B_{\C^3}(\boldsymbol{0},1)$. We want to show that if $c$ is sufficiently small, then $R$ contains no more than $N^2$-many tubes of $\T$.  To accomplish this, we will partition $\T$ into at most $N^2$ parts and show that $R$ contains at most $1$ tube from each of these parts. To partition $\T$, we first partition $E$.  

We can think of partitioning $E$ as assigning a color to each $\delta$-ball in $E$.  We adapt a graph-theoretic argument that we first encountered in \cite{Fu_Ren_Incidences}, in the proof of their Proposition 4.1. We build a graph whose vertex set is the set of centres of the $\delta$-balls in $E$ and place edges so that two vertices have an edge between them if and only if the corresponding $\delta$-balls are both contained in a larger $\delta^{s/4}$-ball.  The resulting graph has maximum degree $N-1$, so by Brooks' Theorem, the chromatic number of the graph is at most $N$; that is, we can color the $\delta$-balls of $E$ using $N$ colors in a way that guarantees that no two $\delta$-balls of the same color are in any $\delta^{s/4}$-ball. 

For $j = 1, \dots, N$, let $E_j$ be the set of $\delta$-balls which were colored with color $j$. For $1 \leq j,k \leq N$, let $\T_{j,k} \subset \T$ be the collection 
\[
\T_{j,k} = \{ T_{q_1, q_2} \in \T : q_1 \in E_j \text{ and } q_2 \in E_k \}.
\]

Since the $N^2$ subsets $\{ \T_{j,k} \}_{1 \leq j,k \leq N}$ partition $\T$, proving \Cref{prop:spacing_check} now reduces to proving the following claim:
\begin{claim}
\label{claim:one_tube_atmost}
For any pair of indices $j,k$ with $1 \leq j,k \leq N$, a $W^{-1}$-tube $R$ contains at most $1$ tube in $\T_{j,k}$. 
\end{claim}

We now prove the claim.
\begin{proof}
Fix a pair of indices $(j, k)$. We will consider two distinct  $\delta$-tubes in $\T_{j,k}$ and show that there is no $W^{-1}$-tube in $B_{\C^3}(\boldsymbol{0},1)$ that essentially contains both of these $\delta$-tubes, provided that $W^{-1} = c \delta^{s/4}$ for a sufficiently small constant $c$. Although the proof is notation-heavy and may appear complicated, it ultimately reduces to an application of the parallelogram law.

Let $\vec{\sigma}_1 = (x_{\vec{\sigma}_1}, y_{\vec{\sigma}_1})$ and $\vec{\sigma}_2 = (x_{\vec{\sigma}_2}, y_{\vec{\sigma}_2})$ be the respective centres of $\delta$-balls $q_1$ and $q_2$ in $E_j$, and let $\vec{\sigma}_3 = (x_{\vec{\sigma}_3}, y_{\vec{\sigma}_3})$ and $\vec{\sigma}_4 = (x_{\vec{\sigma}_4}, y_{\vec{\sigma}_4})$ be the centres of $q_3$ and $q_4$ in $E_k$. We want to show that there is no $W^{-1}$-tube that essentially contains both of the tubes\footnote{One should note that, despite the similarity in notation, the tubes $T_{q_1, q_3}$ and $T_{q_2, q_4}$ do \emph{not} necessarily satisfy the same hypotheses as the tubes appearing in \Cref{prop:quantitative_auxline_intersection}.  In particular, in 
\Cref{prop:quantitative_auxline_intersection}, we assume that $(q_1, q_2, q_3, q_4) \in E' \times E'' \times E'' \times E'$.  In the context of our present proof, we merely know that the pair $(q_1, q_3)$ is in one of $E' \times E''$ or $E'' \times E'$, but we do not know which.  Similarly, we know that the pair $(q_2, q_4)$ is in one of $E' \times E''$ or $E'' \times E'$, but we do not know which.} $T_{q_1, q_3}$ and $T_{q_2, q_4}$.  To accomplish this, we embed $T_{q_1, q_3}$ and $T_{q_2, q_4}$ into $\R^6$ and parameterize their respective images by functions $\boldsymbol{G}_{1,3}, \boldsymbol{G}_{2,4}: B_{\R^2}\left(\vec{0}, \frac{1}{2} \right) \to \R^6$. We will show that for all $(\alpha, \beta)$ in  a set consisting of at least half the area of $B_{\R^2}(\vec{0}, \frac{1}{2})$, the distance from the point $\boldsymbol{G}_{1,3}(\alpha, \beta)$ to the image of $\boldsymbol{G}_{2,4}$ exceeds $W^{-1}$.  

Since $q_1$ and $q_2$ are both in $E_j$, it follows that $\| \vec{\sigma}_1 - \vec{\sigma}_2 \| \geq \delta^{s/4}$.  Similarly, since $q_3$ and $q_4$ are both in $E_k$, it follows that $\| \vec{\sigma}_3 - \vec{\sigma}_4 \| \geq \delta^{s/4}$.
To leverage this separation, we will apply 
the parallelogram law to the vectors $\| \vec{\sigma}_1 - \vec{\sigma}_2\|$ and $\| \vec{\sigma}_3 - \vec{\sigma}_4\|$ to get
\begin{equation}
\label{parallelogram law}
\begin{split}
&2 \| \vec{\sigma}_1 - \vec{\sigma}_2 \|^2 + 2 \| \vec{\sigma}_3 - \vec{\sigma}_4 \|^2\\
&\quad= \| (\vec{\sigma}_1 - \vec{\sigma}_2) + (\vec{\sigma}_3 - \vec{\sigma}_4)\|^2 + \| (\vec{\sigma}_1 - \vec{\sigma}_2) - (\vec{\sigma}_3 - \vec{\sigma}_4)\|^2 
\\
&\quad= \| (\vec{\sigma}_1 + \vec{\sigma}_3) - (\vec{\sigma}_2 + \vec{\sigma}_4)\|^2 + \| (\vec{\sigma}_1 - \vec{\sigma}_3) - (\vec{\sigma}_2 - \vec{\sigma}_4)\|^2.
\end{split}
\end{equation}
We will ultimately take cases according to which of the terms on the right-hand side of (\ref{parallelogram law}) is larger.  In either case, the dominant term will be comparable to $2 \| \vec{\sigma}_1 - \vec{\sigma}_2 \|^2 + 2 \| \vec{\sigma}_3 - \vec{\sigma}_4 \|^2$, which is, in turn, at least $4 (\delta^{s/4})^2$.

The tube $T_{q_1, q_3}$ is roughly the $\delta$-neighborhood of the complex line segment
\[
\overline {l_{\vec{\sigma}_1,\vec{\sigma}_3}} =\left\{ \left(\tfrac{x_{\vec{\sigma}_1}+x_{\vec{\sigma}_3}}{2},\tfrac{y_{\vec{\sigma}_1}+y_{\vec{\sigma}_3}}{2},0 \right) + z \left( \tfrac{y_{\vec{\sigma}_1}-y_{\vec{\sigma}_3}}{2}, -\tfrac{x_{\vec{\sigma}_1}-x_{\vec{\sigma}_3}}{2}, 1 \right):z\in\C, |z| \leq \tfrac{1}{2} \right\},
\]
and $T_{q_2, q_4}$ is the $\delta$-neighborhood of the complex line segment
\[
\overline{l_{\vec{\sigma}_2,\vec{\sigma}_4}} =\left\{ \left(\tfrac{x_{\vec{\sigma}_2}+x_{\vec{\sigma}_4}}{2},\tfrac{y_{\vec{\sigma}_2}+y_{\vec{\sigma}_4}}{2}, 0 \right) + z \left( \tfrac{y_{\vec{\sigma}_2}-y_{\vec{\sigma}_4}}{2}, -\tfrac{x_{\vec{\sigma}_2}-x_{\vec{\sigma}_4}}{2}, 1 \right): z\in\C, |z| \leq \tfrac{1}{2} \right\}.
\]

The complex lines $l_{\vec{\sigma}_1,\vec{\sigma}_3}$ and $l_{\vec{\sigma}_2,\vec{\sigma}_4}$ correspond\footnote{The correspondence is via the canonical embedding $\iota:\C^3 \to \R^6$.  The real vectors $\left( \tfrac{c_1 - c_3}{2}, \tfrac{d_1- d_3}{2}, - \tfrac{(a_1 - a_3)}{2}, - \tfrac{(b_1-b_3)}{2}, 1, 0 \right)$ and $\left( -\tfrac{(d_1- d_3)}{2}, \tfrac{ c_1 - c_3}{2}, \tfrac{b_1-b_3}{2}, - \tfrac{(a_1 - a_3)}{2}, 0, 1 \right)$ are the respective images of the complex vectors $(\vec{u}_1, 1)$ and $(i \vec{u_1}, i)$ under $\iota$. } to real two-planes $P_{1,3}$ and $P_{2,4}$. If we write $x_{\vec{\sigma}_j} = a_j + b_j i$ and $y_{\vec{\sigma}_j} = c_j + d_j i$ with $a_j,b_j,c_j,d_j\in\R$ for $j = 1, \dots, 4$, then we can define parameterizations $\boldsymbol{G}_{1,3}, \boldsymbol{G}_{2,4}: \R^2 \to \R^6$ by 
\[
\begin{split}
\boldsymbol{G}_{1,3}(\alpha, \beta) &= \left( \tfrac{a_1 + a_3}{2}, \tfrac{b_1 + b_3}{2}, \tfrac{c_1 + c_3}{2}, \tfrac{d_1 + d_3}{2}, 0, 0 \right) \\
&\quad+ \alpha \left( \tfrac{c_1 - c_3}{2}, \tfrac{d_1- d_3}{2}, - \tfrac{(a_1 - a_3)}{2}, - \tfrac{(b_1-b_3)}{2}, 1, 0 \right) \\
&\quad+ \beta \left( -\tfrac{(d_1- d_3)}{2}, \tfrac{ c_1 - c_3}{2}, \tfrac{b_1-b_3}{2}, - \tfrac{(a_1 - a_3)}{2}, 0, 1 \right)
\end{split}
\]
and
\[
\begin{split}
\boldsymbol{G}_{2,4}(\alpha, \beta) &= \left( \tfrac{a_2 + a_4}{2}, \tfrac{b_2 + b_4}{2}, \tfrac{c_2 + c_4}{2}, \tfrac{d_2 + d_4}{2}, 0, 0 \right) \\
&\quad+ \alpha \left( \tfrac{c_2 - c_4}{2}, \tfrac{d_2- d_4}{2}, - \tfrac{(a_2 - a_4)}{2}, - \tfrac{(b_2-b_4)}{2}, 1, 0 \right) \\
&\quad+ \beta \left( -\tfrac{(d_2- d_4)}{2}, \tfrac{ c_2 - c_4}{2}, \tfrac{b_2-b_4}{2}, - \tfrac{(a_2 - a_4)}{2}, 0, 1 \right).
\end{split}
\]

The complex line segments $\overline {l_{\vec{\sigma}_1,\vec{\sigma}_3}}$ and $\overline{l_{\vec{\sigma}_2,\vec{\sigma}_4}}$ correspond to the respective images under $\boldsymbol{G}_{1,3}$ and $\boldsymbol{G}_{2,4}$ of the disk $B_{\R^2}\left(\vec{0},\tfrac{1}{2}\right) = \left \{ (\alpha, \beta) \in \R^2: (\alpha^2 + \beta^2)^{1/2} \leq \tfrac{1}{2} \right \}$.
We will consider how the distance
$d(\boldsymbol{G}_{1,3}(\alpha, \beta), P_{2,4})$
varies as $(\alpha, \beta)$ ranges over the ball $B_{\R^2} \left(\vec{0}, \frac{1}{2} \right)$.  To show that no $W^{-1}$-box (essentially) contains both $T_{q_1,q_3}$ and $T_{q_2,q_4}$, it is sufficient to show that the inequality
\begin{equation}
\label{suffCondition1}
d(\boldsymbol{G}_{1,3}(\alpha, \beta), P_{2,4}) > W^{-1} + 4 \delta
\end{equation}
holds for all $(\alpha, \beta)$ in a set that accounts for
at least half 
of the area of $B_{\R^2} \left(\vec{0}, \tfrac{1}{2} \right)$.
(Here, 
\[
d(\boldsymbol{G}_{1,3}(\alpha, \beta), P_{2,4}) = \min_{(\alpha', \beta') \in \R^2} \| \boldsymbol{G}_{1,3}(\alpha, \beta) - \boldsymbol{G}_{2,4}(\alpha', \beta') \|,
\]
i.e. $d(\boldsymbol{G}_{1,3}(\alpha, \beta$ represents the perpendicular distance from the point $\boldsymbol{G}_{1,3}(\alpha, \beta)$ to the plane $P_{2,4}$.)
No matter our eventual choice of $c$, we will have $W^{-1} > 4 \delta$ for $\delta$ sufficiently small, so 
(\ref{suffCondition1}) reduces to showing that
\begin{equation}
\label{suffCondition2}
d(\boldsymbol{G}_{1,3}(\alpha, \beta), P_{2,4}) > 2W^{-1}
\end{equation}
holds for all $(\alpha, \beta)$ in a set that accounts for
at least half 
of the area of $B_{\R^2} \left(\vec{0}, \tfrac{1}{2} \right)$.

By \Cref{claim:perpendicular distance vs distance between corresponding points} ---  which is stated and proved below --- the perpendicular distance from $\boldsymbol{G}_{1,3}(\alpha, \beta)$ to the plane $P_{2,4}$ is comparable to the distance from $\boldsymbol{G}_{1,3}(\alpha, \beta)$ to $\boldsymbol{G}_{2,4}(\alpha, \beta)$; specifically, 
\begin{equation}
\label{impliedConstant}
d(\boldsymbol{G}_{1,3}(\alpha, \beta), P_{2,4}) \geq \frac{1}{\sqrt{3}}
\| \boldsymbol{G}_{1,3}(\alpha, \beta) - \boldsymbol{G}_{2,4}(\alpha, \beta)\|,
\end{equation}
so it suffices to show that
\begin{equation}
\label{suffCondition3}
\| \boldsymbol{G}_{1,3}(\alpha, \beta)- \boldsymbol{G}_{2,4}(\alpha, \beta)\| > 2 \sqrt{3} W^{-1}
\end{equation}
holds for all $(\alpha, \beta)$ in a set that accounts for at least half 
of the area of $B_{\R^2} \left(\vec{0}, \tfrac{1}{2} \right)$.

To estimate $\| \boldsymbol{G}_{1,3}(\alpha ,\beta) - \boldsymbol{G}_{2,4}(\alpha, \beta)\|$, we begin by writing 
\[
\boldsymbol{G}_{1,3}(\alpha, \beta) - \boldsymbol{G}_{2,4}(\alpha, \beta) = \boldsymbol{u} + \alpha \boldsymbol{v} + \beta \boldsymbol{w},
\]
where
\[
\begin{split}
\boldsymbol{u} = \left(\tfrac{(a_1+a_3) - (a_2+a_4)}{2}, \tfrac{(b_1+b_3) - (b_2+b_4)}{2}, \tfrac{(c_1+c_3) - (c_2+c_4)}{2}, \tfrac{(d_1+d_3) - (d_2+d_4)}{2}, 0, 0\right),
\end{split}
\]
\[
\boldsymbol{v} = \left(\tfrac{(c_1-c_3) - (c_2-c_4)}{2}, \tfrac{(d_1-d_3) - (d_2-d_4)}{2}, \tfrac{-(a_1-a_3) + (a_2-a_4)}{2}, \tfrac{-(b_1-b_3) + (b_2-b_4)}{2}, 0, 0 \right),
\]
and
\[
\boldsymbol{w} =  \left(\tfrac{-(d_1-d_3) + (d_2-d_4)}{2}, \tfrac{(c_1-c_3) - (c_2-c_4)}{2}, \tfrac{(b_1-b_3) - (b_2-b_4)}{2}, \tfrac{-(a_1-a_3) + (a_2-a_4)}{2}, 0, 0 \right).
\]
We note that
\[
\| \boldsymbol{u} \| =  \frac{1}{2}\| (\vec{\sigma}_1 + \vec{\sigma}_3) - (\vec{\sigma}_2 + \vec{\sigma}_4)\|.
\]
Meanwhile, the vectors $\boldsymbol{v}$ and $\boldsymbol{w}$ are orthogonal to each other and have the same magnitude; specifically, 
\[
\| \boldsymbol{v}\| = \| \boldsymbol{w}\| = \frac{1}{2}\| (\vec{\sigma}_1 - \vec{\sigma}_3) - (\vec{\sigma}_2 - \vec{\sigma}_4)\|.
\]
This motivates us to take cases on the relative sizes of $\left\| (\vec{\sigma}_1 + \vec{\sigma}_3) - (\vec{\sigma}_2 + \vec{\sigma}_4)\right\|$ and $\left\| (\vec{\sigma}_1 - \vec{\sigma}_3) - (\vec{\sigma}_2 - \vec{\sigma}_4)\right\|$; specifically, we take cases according to whether 
\[
\left\| (\vec{\sigma}_1 - \vec{\sigma}_3) - (\vec{\sigma}_2 - \vec{\sigma}_4)\right\| \leq 2\sqrt{2} \left\| (\vec{\sigma}_1 + \vec{\sigma}_3) - (\vec{\sigma}_2 + \vec{\sigma}_4)\right\|.
\]
(Here, the constant $2 \sqrt{2}$ is chosen to ensure that the right-hand side of a later inequality - namely, inequality (\ref{posMult}) -  is a positive multiple of $\left\| (\vec{\sigma}_1 - \vec{\sigma}_3) - (\vec{\sigma}_2 - \vec{\sigma}_4)\right\|$.) \\

\noindent \textbf{Case 1}:
First, suppose that
\begin{equation}
\label{CaseDivider1}
\left\| (\vec{\sigma}_1 - \vec{\sigma}_3) - (\vec{\sigma}_2 - \vec{\sigma}_4)\right\| \leq 2 \sqrt{2} \left\| (\vec{\sigma}_1 + \vec{\sigma}_3) - (\vec{\sigma}_2 + \vec{\sigma}_4)\right\|.
\end{equation}
By (\ref{parallelogram law}) and (\ref{CaseDivider1}), we have that 
\[
4 (\delta^{s/4})^2 \leq \left(1 + (2 \sqrt{2})^2 \right) \left\| (\vec{\sigma}_1 + \vec{\sigma}_3) - (\vec{\sigma}_2 + \vec{\sigma}_4)\right\|^2 = 9 \left\| (\vec{\sigma}_1 + \vec{\sigma}_3) - (\vec{\sigma}_2 + \vec{\sigma}_4)\right\|^2.
\]
We will consider the function 
\[
F(\alpha, \beta) := \| \boldsymbol{G}_{1,3}(\alpha, \beta) - \boldsymbol{G}_{2,4}(\alpha, \beta) \|^2
\]
and will show that, provided we take $W^{-1}$ to be a sufficiently small multiple of $\delta^{s/4}$, we have
\[
F(\alpha, \beta) > 12 W^{-2}
\]
for all $(\alpha, \beta)$ in a set consisting of at least half of $B_{\R^2} \left(\vec{0}, \tfrac{1}{2} \right)$.
First, we note that 
\[
F(0,0) = \| \boldsymbol{u} \|^2 = \frac{1}{4} \left\| (\vec{\sigma}_1 + \vec{\sigma}_3) - (\vec{\sigma}_2 + \vec{\sigma}_4)\right\|^2 = \frac{1}{4} \left( \frac{4}{9}^{\,} \delta^{s/2} \right) = \frac{1}{9}^{\,}\delta^{s/2}
\]
It is natural to consider when we have
\[
F(\alpha, \beta) \geq F(0,0).
\]
To this end, note that $F$ is a quadratic function of $\alpha$ and $\beta$.  Specifically, using the fact that $\boldsymbol{v} \cdot \boldsymbol{w} = 0$, we compute that
\begin{equation}
\label{FFormula}
\begin{split}
F(\alpha, \beta) &= ( \boldsymbol{u} + \alpha \boldsymbol{v} + \beta \boldsymbol{w} ) \cdot  ( \boldsymbol{u} + \alpha \boldsymbol{v} + \beta \boldsymbol{w} ) \\
& = \| \boldsymbol{v} \|^2 \alpha^2 +  \| \boldsymbol{w} \|^2  \beta^2 + (\boldsymbol{v} \cdot \boldsymbol{u}) \alpha + (\boldsymbol{w} \cdot \boldsymbol{u}) \beta + \|  \boldsymbol{u} \|^2. 
\end{split}
\end{equation}
From this formula, it is apparent that we have $F(\alpha, \beta) > F(0,0)$ for all points $(\alpha, \beta)$ in a half-space.  There are a couple of ways to see this, which we explain below:
\begin{enumerate}
\item We observe that if $F(\alpha, \beta) < F(0,0)$, then we must have $(\boldsymbol{v} \cdot \boldsymbol{u}) \alpha + (\boldsymbol{w} \cdot \boldsymbol{u}) \beta < 0$, which implies that the antipodal point $(-\alpha, -\beta)$ satisfies $F(-\alpha, -\beta) > F(0,0)$.
\item We observe that the graph of $F$ over $\R^2$ is a paraboloid.  If $(0,0)$ is the vertex of the paraboloid, then we have that $F(\alpha, \beta) \geq F(0,0)$ for all $(\alpha, \beta)$.  Otherwise, if the vertex is not at $(0,0)$, let $C$ be the unique level curve of $F$ passing through $(0,0)$.  Draw a line $\ell$ through $(0,0)$ tangent to $C$.  Then for all points $(\alpha, \beta)$ lying on the opposite side of $\ell$ as the vertex, we must have $F(\alpha, \beta) \geq F(0,0)$. (Note that the collection of points $(\alpha, \beta)$ lying on the opposite side of $\ell$ as the vertex consists of half of the disk  $B_{\R^2} \left( \vec{0}, \frac{1}{2} \right)$.)
\end{enumerate}
Either argument shows that for all $(\alpha, \beta)$ in a region consisting of at least half of $B_{\R^2} \left( \vec{0}, \frac{1}{2} \right)$, we have
\[
F(\alpha, \beta) \geq F(0,0) \geq \frac{1}{9}^{\,} \delta^{s/2},
\]
which means that
\[
\| \boldsymbol{G}_{1,3}(\alpha, \beta) - \boldsymbol{G}_{2,4}(\alpha, \beta) \| \geq \frac{1}{3}^{\,} \delta^{s/4}
\]
\noindent \textbf{Case 2}:
Now, suppose that 
\begin{equation}
\label{CaseDivider2}
\| (\vec{\sigma}_1 - \vec{\sigma}_3) - (\vec{\sigma}_2 - \vec{\sigma}_4)\| \geq 2\sqrt{2} \| (\vec{\sigma}_1 + \vec{\sigma}_3) - (\vec{\sigma}_2 + \vec{\sigma}_4)\|.
\end{equation}
In this case, since $\boldsymbol{v} \cdot \boldsymbol{w} = 0$ and $\| \boldsymbol{v}\| = \| \boldsymbol{w}\| = \frac{1}{2} \| (\vec{\sigma}_1 - \vec{\sigma}_3) - (\vec{\sigma}_2 - \vec{\sigma}_4)\|$, we have by the triangle inequality that 
\[
\begin{split}
\| \boldsymbol{G}_{1,3}(\alpha, \beta) - \boldsymbol{G}_{2,4}(\alpha, \beta)\| 
&\geq \frac{|(\alpha, \beta)|}{2} \| (\vec{\sigma}_1 - \vec{\sigma}_3) - (\vec{\sigma}_2 - \vec{\sigma}_4)\| \\
& \indent - \frac{1}{2}\| (\vec{\sigma}_1 + \vec{\sigma}_3) - (\vec{\sigma}_2 + \vec{\sigma}_4)\| \\
&\geq \frac{|(\alpha, \beta)|}{2} \| (\vec{\sigma}_1 - \vec{\sigma}_3) - (\vec{\sigma}_2 - \vec{\sigma}_4)\| \\
& \indent - \frac{1}{2} \left(\frac{1}{2 \sqrt{2}} \| (\vec{\sigma}_1 - \vec{\sigma}_3) - (\vec{\sigma}_2 - \vec{\sigma}_4)\| \right).
\end{split}
\]
It follows that if $|\alpha|^2 + |\beta|^2 \geq \frac{1}{2}$, then 
\begin{equation}
\label{posMult}
\begin{split}
\|\boldsymbol{G}_{1,3}(\alpha, \beta) - \boldsymbol{G}_{2,4}(\alpha, \beta)\| &\geq
\frac{1}{2} \left(\frac{1}{\sqrt{2}} - \frac{1}{2 \sqrt{2}} \right) 
\left\| (\vec{\sigma}_1 - \vec{\sigma}_3) - (\vec{\sigma}_2 - \vec{\sigma}_4)\right\|
\\
&= \frac{1}{4 \sqrt{2}} \left\| (\vec{\sigma}_1 - \vec{\sigma}_3) - (\vec{\sigma}_2 - \vec{\sigma}_4)\right\|.
\end{split}
\end{equation}
By (\ref{parallelogram law}) and (\ref{CaseDivider2}), we then have that 
\[
\begin{split}
4 (\delta^{s/4})^2 &\leq \left( \frac{1}{8} + 1 \right) \left\| (\vec{\sigma}_1 - \vec{\sigma}_3) - (\vec{\sigma}_2 - \vec{\sigma}_4)\right\|^2 = \frac{9}{8}^{\,}   \left\| (\vec{\sigma}_1 - \vec{\sigma}_3) - (\vec{\sigma}_2 - \vec{\sigma}_4)\right\|^2. \\
\end{split}
\]
Consequently, if $|\alpha|^2 + |\beta|^2 \geq \frac{1}{2}$, then
\begin{equation}
\label{bound2}
\| \boldsymbol{G}_{1,3}(\alpha,\beta) - \boldsymbol{G}_{2,4}(\alpha,\beta)\| \geq \frac{1}{4 \sqrt{2}} \left( \frac{4 \sqrt{2}}{3} \right) \delta^{s/4} = \frac{1}{3}^{\,} \delta^{s/4}.
\end{equation}
\, \, \\ \\
\noindent \textbf{Conclusion}: 
Combining Cases 1-2, we see that if we take $W^{-1} = c \delta^{s/4}$ for some $c < \frac{1}{6 \sqrt{3}}$, then our desired inequality (\ref{suffCondition3})
holds for all $(\alpha, \beta)$ in a set that accounts for at least half 
of the area of $B_{\R^2} \left(\vec{0}, \tfrac{1}{2} \right)$.
\end{proof}

\begin{claim}
\label{claim:perpendicular distance vs distance between corresponding points}
Let the maps $\boldsymbol{G}_{1,3}$ and $\boldsymbol{G}_{2,4}$ be defined as above.  Then for any point $(\alpha, \beta) \in B_{\R^2} \left(\vec{0}, \frac{1}{2} \right)$, we have that 
\begin{equation}
\label{impliedConst}
d(\boldsymbol{G}_{1,3}(\alpha, \beta), P_{2,4}) \geq \frac{1}{\sqrt{3}}
\| \boldsymbol{G}_{1,3}(\alpha, \beta) - \boldsymbol{G}_{2,4}(\alpha, \beta)\|.
\end{equation}
\end{claim}

\begin{proof}
We consider the subspace $V = \Span(\boldsymbol{e}_1, \boldsymbol{e}_2, \boldsymbol{e}_3, \boldsymbol{e}_4) \subset \R^6$.  Let $\theta_{1,3}$ denote the principal angle between $V$ and $P_{1,3}$.  Similarly, let $\theta_{2,4}$ denote the principal angle between $V$ and $P_{2,4}$.  We note that $\theta_{1,3}, \theta_{2,4} \gtrsim 1$; in particular,
\begin{equation}
\theta_{1,3}, \theta_{2,4} \geq \arctan \left( \frac{1}{\sqrt{2}} \right).
\end{equation}
To see why this is true, 
let 
\[
\vec{y}_1 = \left( \tfrac{y_{\vec{\sigma}_1}-y_{\vec{\sigma}_3}}{2},-\tfrac{x_{\vec{\sigma}_1}-x_{\vec{\sigma}_3}}{2}\right)\quad\text{and}\quad
\vec{y}_2 = \left( \tfrac{y_{\vec{\sigma}_2}-y_{\vec{\sigma}_4}}{2},-\tfrac{x_{\vec{\sigma}_2}-x_{\vec{\sigma}_4}}{2}\right)
\]
so that $\left(\vec{y}_1, 1\right)$ has the same direction as the complex line 
\[
l_{\vec{\sigma}_1,\vec{\sigma}_3} =\left\{\left(\tfrac{x_{\vec{\sigma}_1}+x_{\vec{\sigma}_3}}{2},\tfrac{y_{\vec{\sigma}_1}+y_{\vec{\sigma}_3}}{2}, 0 \right) + z \left( \tfrac{y_{\vec{\sigma}_1}-y_{\vec{\sigma}_3}}{2}, -\tfrac{x_{\vec{\sigma}_1}-x_{\vec{\sigma}_3}}{2}, 1 \right) :z\in\C\right\},
\]
and $\left(\vec{y}_2, 1\right)$ has the same direction as the complex line 
\[
l_{\vec{\sigma}_2,\vec{\sigma}_4} = \left\{\left(\tfrac{x_{\vec{\sigma}_2}+x_{\vec{\sigma}_4}}{2},\tfrac{y_{\vec{\sigma}_2}+y_{\vec{\sigma}_4}}{2}, 0 \right) + z \left( \tfrac{y_{\vec{\sigma}_2}-y_{\vec{\sigma}_4}}{2}, -\tfrac{x_{\vec{\sigma}_2}-x_{\vec{\sigma}_4}}{2}, 1 \right) :z\in\C\right\}.
\]
Then
$\| \vec{y}_1 \|, \| \vec{y}_2 \| \lesssim 1$; 
in particular, since we assume that the points \\
$(x_{\vec{\sigma}_1}, y_{\vec{\sigma}_1}), \dots, (x_{\vec{\sigma}_4}, y_{\vec{\sigma}_4})$ are in $B_{\C^2}\!\left(\vec{0},1\right)$, it follows that $\| \vec{y}_1 \|, \| \vec{y}_2 \| \leq \sqrt{2}$.  Here we show the computation for $\| \vec{y}_1 \|$:
\[
\| \vec{y}_1 \|^2 = \frac{1}{4} \left( |y_{\vec{\sigma}_1} - y_{\vec{\sigma}_3}|^2 + |x_{\vec{\sigma}_1} - x_{\vec{\sigma}_3}|^2 \right) \leq \frac{1}{4} \left( 2^2 + 2^2 \right) \leq 2.
\]

We note that each of $P_{1,3}$ and $P_{2,4}$ intersects any translate of $V$ in a single point. This is because a basis for either, along with a basis for $V$, forms a basis for $\R^6$.

After fixing $(\alpha, \beta) \in B_{\R^2}\left(\vec{0},\tfrac{1}{2}\right)$, we consider the affine subspace \[V_{\alpha, \beta} := V + (0, 0, 0, 0, \alpha, \beta).\]
To prove (\ref{impliedConst}), we consider the right triangle $\triangle ABC$, where $A$ is the point \\ $\boldsymbol{G}_{1,3}(\alpha, \beta)$, $B$  is the point $\boldsymbol{G}_{2,4}(\alpha, \beta)$, and $C$ is the point in $P_{2,4}$ that attains the minimum distance from $\boldsymbol{G}_{1,3}(\alpha, \beta)$.
Then we have that 
\[
\begin{split}
d(\boldsymbol{G}_{1,3}(\alpha, \beta), P_{2,4}) &= AB \\
&= AC \sin( \angle{ABC}) \\
&\geq AC \sin( \theta_{2,4})  \\
&\geq \sin \left( \arctan \frac{1}{\sqrt{2}} \right) AC \\
& = \frac{1}{\sqrt{3}} \, AC \\
& =\frac{1}{\sqrt{3}}  \| \boldsymbol{G}_{1,3}(\alpha, \beta) - \boldsymbol{G}_{2,4}(\alpha, \beta)\|
\end{split}
\]
(Note that to go from the fourth to the fifth line, we have used the identity $\sin( \arctan(x)) = \frac{x}{\sqrt{1 + x^2}}$.)
\end{proof}

\section{Appendix}
We return to the problem, introduced in Section 2, of estimating the volume of the intersection between the $\delta$-neighborhoods of two complex lines.

\begin{proposition}
Suppose that $\ell_1, \ell_2 \subset \C^2$ are two complex lines through the origin that make angle $\theta > 0$ to each other.   If we let $\iota: \C^2 \to \R^4$ denote the embedding that sends $(z_1,z_2)$ to $\left(\Re(z_1),\Im(z_1),\Re(z_2),\Im(z_2)\right)$, then 
\begin{equation}
\label{C2 case}
|\iota(N_{\delta}(\ell_1)) \cap \iota( (N_{\delta}(\ell_2)))| \sim \frac{\delta^4}{\sin^2 \theta}.
\end{equation}
More generally, if $\ell_1, 
\ell_2$ are lines through the origin in $\C^n$ which make angle $\theta > 0$ to each other, then 
\begin{equation}
\label{Cn case}
|\iota(N_{\delta}(\ell_1)) \cap \iota( N_{\delta}(\ell_2))| \sim \frac{\delta^{2n}}{\sin^2 \theta}.
\end{equation}
\end{proposition}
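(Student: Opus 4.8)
The plan is to prove the $\C^2$ case \eqref{C2 case} by an explicit linear change of variables, and then to deduce the general case \eqref{Cn case} from it by slicing off the orthogonal complement of the complex plane spanned by the two lines.

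For \eqref{C2 case}, write $\ell_j=\C\boldsymbol{u}_j$ with $\|\boldsymbol{u}_j\|=1$; since $\theta>0$ the vectors $\boldsymbol{u}_1,\boldsymbol{u}_2$ are $\C$-linearly independent. For $j=1,2$ pick a unit vector $\boldsymbol{u}_j^\perp\in\C^2$ spanning the Hermitian orthogonal complement of $\C\boldsymbol{u}_j$. Because $\iota$ is an isometry and the nearest point of $\C\boldsymbol{u}_j$ to $\boldsymbol{x}$ is $\langle\boldsymbol{x},\boldsymbol{u}_j\rangle\boldsymbol{u}_j$, we have $\dist(\iota(\boldsymbol{x}),\iota(\ell_j))=\|\boldsymbol{x}-\langle\boldsymbol{x},\boldsymbol{u}_j\rangle\boldsymbol{u}_j\|=|\langle\boldsymbol{x},\boldsymbol{u}_j^\perp\rangle|$, so
\[
\iota(N_\delta(\ell_1))\cap\iota(N_\delta(\ell_2))=\iota\bigl(\bigl\{\boldsymbol{x}\in\C^2:|\langle\boldsymbol{x},\boldsymbol{u}_1^\perp\rangle|<\delta\ \text{and}\ |\langle\boldsymbol{x},\boldsymbol{u}_2^\perp\rangle|<\delta\bigr\}\bigr).
\]
The complex-linear map $A\colon\boldsymbol{x}\mapsto(\langle\boldsymbol{x},\boldsymbol{u}_1^\perp\rangle,\langle\boldsymbol{x},\boldsymbol{u}_2^\perp\rangle)$ is invertible and carries this region onto the polydisk $\{|w_1|<\delta\}\times\{|w_2|<\delta\}$, whose real $4$-dimensional volume is $\pi^2\delta^4$. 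The real Jacobian of $A$ equals $|\det_{\C}A|^2=\det\bigl(\operatorname{Gram}(\boldsymbol{u}_1^\perp,\boldsymbol{u}_2^\perp)\bigr)=1-|\langle\boldsymbol{u}_1^\perp,\boldsymbol{u}_2^\perp\rangle|^2$, and writing $\boldsymbol{u}_j=(a_j,b_j)$ and $\boldsymbol{u}_j^\perp=(-\overline{b_j},\overline{a_j})$ one checks directly that $|\langle\boldsymbol{u}_1^\perp,\boldsymbol{u}_2^\perp\rangle|=|\langle\boldsymbol{u}_1,\boldsymbol{u}_2\rangle|=\cos\theta$. Changing variables therefore gives $|\iota(N_\delta(\ell_1))\cap\iota(N_\delta(\ell_2))|=\pi^2\delta^4/\sin^2\theta$, which is \eqref{C2 case} with an explicit constant.

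For \eqref{Cn case}, write again $\ell_j=\C\boldsymbol{u}_j$ with $\|\boldsymbol{u}_j\|=1$; the $\boldsymbol{u}_j$ span a complex $2$-plane $\Pi\cong\C^2$, and we decompose $\C^n=\Pi\oplus\Pi^{\perp}$ orthogonally, writing $\boldsymbol{x}=(\boldsymbol{y},\boldsymbol{z})$ with $\boldsymbol{y}\in\Pi$ and $\boldsymbol{z}\in\Pi^{\perp}$. Since $\ell_j\subseteq\Pi$, the nearest point of $\ell_j$ to $\boldsymbol{x}$ lies in $\Pi$, hence $\dist(\boldsymbol{x},\ell_j)^2=\dist_{\Pi}(\boldsymbol{y},\ell_j)^2+\|\boldsymbol{z}\|^2$. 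Fubini's theorem in the $\boldsymbol{z}$-variable (ranging over $\Pi^{\perp}\simeq\R^{2n-4}$) then gives
\[
|\iota(N_\delta(\ell_1))\cap\iota(N_\delta(\ell_2))|=\int_{\|\boldsymbol{z}\|<\delta}\Bigl|\bigl\{\boldsymbol{y}\in\Pi:\dist_{\Pi}(\boldsymbol{y},\ell_1),\dist_{\Pi}(\boldsymbol{y},\ell_2)<\sqrt{\delta^2-\|\boldsymbol{z}\|^2}\bigr\}\Bigr|\,d\boldsymbol{z}.
\]
Applying the already-established $\C^2$ case inside $\Pi$ with radius $\sqrt{\delta^2-\|\boldsymbol{z}\|^2}$ shows the inner volume is $\sim(\delta^2-\|\boldsymbol{z}\|^2)^2/\sin^2\theta$ uniformly in $\boldsymbol{z}$, and the substitution $\boldsymbol{z}=\delta\boldsymbol{w}$ gives $\int_{\|\boldsymbol{z}\|<\delta}(\delta^2-\|\boldsymbol{z}\|^2)^2\,d\boldsymbol{z}=c_n\delta^{2n}$ for a positive dimensional constant $c_n$. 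Combining these proves \eqref{Cn case}.

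The argument is almost entirely bookkeeping; the only points needing care are the identity $|\langle\boldsymbol{u}_1^\perp,\boldsymbol{u}_2^\perp\rangle|=|\langle\boldsymbol{u}_1,\boldsymbol{u}_2\rangle|$, which is what turns the Jacobian of the change of variables into $\sin^2\theta$, and the fact that the constant in the $\C^2$ estimate is independent of both $\delta$ and $\theta$, so that the slicing integral in the $\C^n$ step genuinely converges to a dimensional multiple of $\delta^{2n}/\sin^2\theta$. I do not expect any real obstacle: the pair of complex lines is rigid enough (their two principal angles coincide, as recorded in \Cref{sect_def}) that the change of variables is an exact identity rather than an approximation, which simultaneously delivers the upper and lower bounds.
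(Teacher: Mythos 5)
Your proof is correct and gives the exact constant, not just a $\sim$ estimate. The reduction of \eqref{Cn case} to \eqref{C2 case} is essentially the same slicing/Fubini argument as the paper's, but your treatment of the $\C^2$ case takes a genuinely different route. The paper fixes coordinates so that $\boldsymbol{u}_1=(1,0)$ and $\boldsymbol{u}_2=\tfrac{1}{\sqrt{1+r^2}}(1,re^{i\zeta})$, explicitly computes the orthogonal projection onto $\iota(\ell_2)$ in a separate lemma to describe the slices of $N_\delta(\ell_2)$ over the $\vec{y}$-variable as disks of radius $\delta\sqrt{1+r^2}/r$, and then integrates. You instead package the two distance conditions as $|\langle\boldsymbol{x},\boldsymbol{u}_j^\perp\rangle|<\delta$, apply the $\C$-linear map $A=(\langle\cdot,\boldsymbol{u}_1^\perp\rangle,\langle\cdot,\boldsymbol{u}_2^\perp\rangle)$ to send the region to a polydisk, and read off the volume from the real Jacobian $|\det_\C A|^2=\det\operatorname{Gram}(\boldsymbol{u}_1^\perp,\boldsymbol{u}_2^\perp)=\sin^2\theta$, using the identity $|\langle\boldsymbol{u}_1^\perp,\boldsymbol{u}_2^\perp\rangle|=|\langle\boldsymbol{u}_1,\boldsymbol{u}_2\rangle|$. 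Your version is coordinate-free, avoids the projection computation and the auxiliary lemma entirely, and makes transparent that the factor $\sin^{-2}\theta$ is the reciprocal of the Gram determinant; the paper's version is more computational but perhaps more self-contained for a reader who does not want to recall that a $\C$-linear map has real Jacobian $|\det_\C|^2$. For the $n>2$ case, your Fubini in the $\Pi^\perp$ variable with the shrunken radius $\sqrt{\delta^2-\|\boldsymbol{z}\|^2}$ is a more precise rendering of the paper's brief remark about integrating the four-dimensional cross sections over the common short directions.
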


\begin{proof}
Let $\boldsymbol{u}_1$ and $\boldsymbol{u}_2$ be direction vectors for the tubes' respective central axes.  Suppose without loss of generality that $\boldsymbol{u}_1 = (1,0)$.  If $\boldsymbol{u}_2$ is parallel to $(0,1)$, then $\theta = \tfrac{\pi}{2}$, and $|\iota(N_{\delta}(\ell_1)) \cap \iota( N_{\delta}(\ell_2))| = \delta^4$.
Henceforth, we may assume that $\boldsymbol{u}_2 = \tfrac{1}{\sqrt{1+ r^2}} (1, r e^{i \zeta})$ for some $\zeta \in [0, 2 \pi)$ and some real number $r > 0$.  Note that in this case, we have that
\[
\sin^2 \theta = \frac{r^2}{1 + r^2}.
\]

Now, we set some notation.  We define 
\[
\begin{split}
\boldsymbol{v}_1 &= \iota(\boldsymbol{u}_1) = (1,0,0,0); \\
\boldsymbol{v}_2 &=   \iota(i \boldsymbol{u}_1) = (0, 1, 0, 0); \\
\boldsymbol{w}_1 &= \iota(\boldsymbol{u}_2) = \tfrac{1}{\sqrt{1+ r^2}}(1,0,r \cos \zeta , r \sin \zeta); \\
\boldsymbol{w}_2 &=   \iota(i \boldsymbol{u}_1) = \tfrac{1}{\sqrt{1+ r^2}} (0, 1, - r \sin \zeta, r \cos \zeta). \\
\end{split}
\]
We let $V = \Span\left\{ \boldsymbol{v}_1, \boldsymbol{v}_2\right\}$ and $W = \Span\left\{ \boldsymbol{w}_1, \boldsymbol{w}_2\right\}$ so that $\iota(N_{\delta}(\ell_1)) = N_{\delta}(V)$, and $\iota( N_{\delta}(\ell_2)) = N_{\delta}(W)$. We write a point in $\R^4$ as $(\vec{x}, \vec{y}) \in \R^2 \times \R^2$. Then, $N_{\delta}(V) = \{ (\vec{x}, \vec{y}): \| y\| \leq \delta \}$, and
\begin{equation}
\label{intersection volume 1}
\begin{split}
|\iota(T_1) \cap \iota( T_2)| &= |N_{\delta}(V) \cap N_{\delta}(W)| = \int_{\R^4} 1_{N_{\delta}(V)} 1_{N_{\delta}(W)} \, dV  \\
&= \int_{B_{\R^2} \left(\vec{0}, \delta \right)} \left( \int_{\- \infty}^{\infty} \int_{-\infty}^{\infty} 1_{N_{\delta}(W)}(\vec{x}, \vec{y}) \, dA_{\vec{x}} \right) \, dA_{\vec{y}}.
\end{split}
\end{equation}

\par By Lemma \ref{slice size}, we have $(\vec{x}, \vec{y}) \in N_{\delta}(W)$ if and only if $\vec{x}$ is in the ball of radius $\tfrac{\delta \sqrt{1+r^2}}{r}$ around the point $ \tfrac{1}{r} R_{\zeta}(\vec{y}) = \tfrac{1}{r}(y_1 \cos \zeta + y_2 \sin \zeta, -y_1 \sin \zeta + y_2 \cos \zeta)$, where $R_{\zeta}$ denotes the rotation around the origin through an angle $\zeta$ in the plane.  Thus, resuming from \eqref{intersection volume 1}, we conclude that
\[
\begin{split}
|\iota(T_1) \cap \iota( T_2)| &= \int_{B_{\R^2} \left(\vec{0}, \delta \right)} \left( 
\int_{B_{\R^2} \left(\tfrac{1}{r} R_{\zeta}( \vec{y}), \tfrac{\delta \sqrt{1+r^2}}{r}
\right)} 
1 \, dA_{\vec{x}} 
\right) \, dA_{\vec{y}}
 \\
&\sim \int_{B_{\R^2}\left( \vec{0}, \delta \right)} \frac{\delta^2(1 + r^2)}{r^2} \,  dA_{\vec{y}} \,  \sim \,  \frac{\delta^{4}(1+r^2)}{r^2} \,  = \,  \frac{\delta^4}{\sin^2 \theta}.
\end{split}
\]
This completes the proof of \eqref{C2 case}.  For $n>2$, the orthogonal complements of $\iota(\ell_1)$ and $\iota(\ell_1)$ intersect in a subspace of dimension $2n-4$.  Any slice of $\iota(N_{\delta}(\ell_1) \cap \iota(N_{\delta}(\ell_1))$ by a translation of this subspace is a figure like that of the $C^2$ case.  We can integrate the $4$-dimensional cross section volume over the common short directions to arrive at \eqref{Cn case}.
\end{proof}

\begin{lemma}
\label{slice size}
Let $\boldsymbol{v}_1 = \iota(\boldsymbol{u}_1) = (1,0,0,0)$ and let $\boldsymbol{v}_2 =   \iota(i \boldsymbol{u}_1) = (0, 1, 0, 0)$.  
Fixing $r > 0$ and $\zeta \in [0, 2 \pi)$, 
let $\boldsymbol{w}_1 = \tfrac{1}{\sqrt{1 + r^2}}(1,0,r \cos \zeta , r \sin \zeta)$ and let $\boldsymbol{w}_2 = \tfrac{1}{\sqrt{1 + r^2}} (0, 1, - r \sin \zeta, r \cos \zeta)$.
Let $W = \Span\left\{ \boldsymbol{w}_1, \boldsymbol{w}_2\right\}$.  Then we have
\[
(x_1, x_2, y_1, y_2) \in N_{\delta}(W) \iff (x_1, x_2) \in B_{\R^2}\left( \tfrac{1}{r} R_{\zeta}(y_1, y_2), \frac{\delta \sqrt{1 + r^2}}{r} \right),
\]
where $R_{\zeta}(y_1, y_2) = (y_1 \cos \zeta + y_2 \sin \zeta, -y_1 \sin \zeta + y_2 \cos \zeta)$ is the rotation of $(y_1, y_2)$ through angle $\zeta$.
\end{lemma}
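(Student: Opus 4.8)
The plan is to compute $\operatorname{dist}\big((x_1,x_2,y_1,y_2),W\big)$ in closed form and compare it with $\delta$. First I would observe that $\boldsymbol{w}_1,\boldsymbol{w}_2$ form an orthonormal pair: each has norm $\tfrac{1}{\sqrt{1+r^2}}\sqrt{1+r^2}=1$, and $\boldsymbol{w}_1\cdot\boldsymbol{w}_2=\tfrac{1}{1+r^2}\big(r\cos\zeta\cdot(-r\sin\zeta)+r\sin\zeta\cdot r\cos\zeta\big)=0$. Consequently, for a point $\boldsymbol{p}=(x_1,x_2,y_1,y_2)\in\R^4$, the orthogonal projection onto $W$ is $(\boldsymbol{p}\cdot\boldsymbol{w}_1)\boldsymbol{w}_1+(\boldsymbol{p}\cdot\boldsymbol{w}_2)\boldsymbol{w}_2$, and
\[
\operatorname{dist}(\boldsymbol{p},W)^2=\|\boldsymbol{p}\|^2-(\boldsymbol{p}\cdot\boldsymbol{w}_1)^2-(\boldsymbol{p}\cdot\boldsymbol{w}_2)^2.
\]

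Next I would expand the two inner products. Writing $(\widetilde y_1,\widetilde y_2):=R_\zeta(y_1,y_2)=(y_1\cos\zeta+y_2\sin\zeta,\,-y_1\sin\zeta+y_2\cos\zeta)$, a direct computation gives $\boldsymbol{p}\cdot\boldsymbol{w}_1=\tfrac{1}{\sqrt{1+r^2}}(x_1+r\widetilde y_1)$ and $\boldsymbol{p}\cdot\boldsymbol{w}_2=\tfrac{1}{\sqrt{1+r^2}}(x_2+r\widetilde y_2)$. This is the one place where it pays to name the rotation $R_\zeta$ before expanding: it is precisely what decouples the contributions of $(x_1,y_1,y_2)$ and $(x_2,y_1,y_2)$ into two identical one-variable expressions. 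Since $R_\zeta$ is an isometry of $\R^2$, I also have $\|\boldsymbol{p}\|^2=x_1^2+x_2^2+\widetilde y_1^2+\widetilde y_2^2$.

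The computation then collapses via the elementary identity $t^2+\widetilde y^2-\tfrac{(t+r\widetilde y)^2}{1+r^2}=\tfrac{(rt-\widetilde y)^2}{1+r^2}$, applied with $(t,\widetilde y)=(x_1,\widetilde y_1)$ and $(t,\widetilde y)=(x_2,\widetilde y_2)$ and summed:
\[
\operatorname{dist}(\boldsymbol{p},W)^2=\frac{(rx_1-\widetilde y_1)^2+(rx_2-\widetilde y_2)^2}{1+r^2}=\frac{r^2}{1+r^2}\,\Big\|(x_1,x_2)-\tfrac1r R_\zeta(y_1,y_2)\Big\|^2.
\]
Hence $\boldsymbol{p}\in N_\delta(W)$, i.e. $\operatorname{dist}(\boldsymbol{p},W)\le\delta$, if and only if $\big\|(x_1,x_2)-\tfrac1r R_\zeta(y_1,y_2)\big\|\le\tfrac{\delta\sqrt{1+r^2}}{r}$, which is exactly the claimed equivalence.

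There is no genuine obstacle here — the lemma is a bookkeeping computation — and the only two things worth flagging are the choice to introduce $(\widetilde y_1,\widetilde y_2)=R_\zeta(y_1,y_2)$ up front, which keeps the algebra symmetric and makes the final collapse transparent, and the use of $r>0$, which is what makes dividing by $r$ legitimate (the degenerate case $r=0$, corresponding to $\theta=0$, is excluded by hypothesis).
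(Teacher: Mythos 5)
Your proof is correct and follows essentially the same route as the paper: compute $\operatorname{dist}(\boldsymbol{p},W)^2 = \|\boldsymbol{p}\|^2 - (\boldsymbol{p}\cdot\boldsymbol{w}_1)^2 - (\boldsymbol{p}\cdot\boldsymbol{w}_2)^2$ using the orthonormality of $\boldsymbol{w}_1,\boldsymbol{w}_2$, and simplify to $\tfrac{r^2}{1+r^2}\|(x_1,x_2)-\tfrac1r R_\zeta(y_1,y_2)\|^2$. Your bookkeeping with $\widetilde y$ is a mild streamlining, and in fact your correct coefficient $\tfrac{r^2}{1+r^2}$ flags a small typo in the paper's penultimate display, which writes $\tfrac{r}{1+r^2}$ (though the paper's stated conclusion is unaffected).
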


\begin{proof}
Let $\Pi_{W}$ denote the orthogonal projection onto $W$, and let $\Pi_{W^{\perp}}$ denote the orthogonal projection onto the orthogonal complement of $W$.
The point $(\vec{x}, \vec{y}) =  (x_1, x_2, y_1, y_2)$ is in $N_{\delta}(W)$ if and only if $\| \Pi_{W^{\perp}}(\vec{x}, \vec{y}) \| < \delta$.   We compute that
\[
\begin{split}
\| \Pi_{W^{\perp}}&(\vec{x}, \vec{y}) \|^2 =  \| (\vec{x}, \vec{y}) -  \Pi_{W^{\perp}}(\vec{x}, \vec{y}) \|^2  \\
&=  \| (\vec{x}, \vec{y}) -((\vec{x}, \vec{y}) \cdot \boldsymbol{w}_1 ) \boldsymbol{w}_1 - ( (\vec{x}, \vec{y}) \cdot \boldsymbol{w}_2) \boldsymbol{w}_2 \|^2  \\
&= \frac{1}{1 + r^2} \Big( (r^2 x_1^2 + r_2 x_2^2 + y_1^2 + y_2^2) \\
&\indent \indent \indent \indent - 2r ( (x_1y_1 + x_2 y_2) \cos \zeta + (x_1 y_2 - x_2 y_1) \sin \zeta) \Big) \\
&=  \frac{1}{1 + r^2} \Big( (rx_1 - (y_1 \cos \zeta + y_2 \sin \zeta) )^2 + (rx_2 - (-y_1 \sin \zeta + y_2 \cos \zeta) )^2 \Big) \\
&=  \frac{r}{1 + r^2} \Big( (x_1 - \tfrac{1}{r}(y_1 \cos \zeta + y_2 \sin \zeta) )^2 + (x_2 - \tfrac{1}{r}(-y_1 \sin \zeta + y_2 \cos \zeta) )^2  \Big) \\
&= \frac{r}{1 + r^2} \left \| \vec{x} - \tfrac{1}{r} R_{\zeta}(\vec{y}) \right \|^2.
\end{split}
\]
(Note that some steps are omitted between the second and third lines.)

We conclude that $\| \Pi_{W^{\perp}}(\vec{x}, \vec{y}) \| < \delta$ if and only if $(x_1, x_2)$ is within the ball of radius $\delta \sqrt{1 + r^2}/r$ centered at the point
\[
\tfrac{1}{r} R_{\zeta}(\vec{y}) = \tfrac{1}{r} (y_1 \cos \zeta + y_2 \sin \zeta, -y_1 \sin \zeta + y_2 \cos \zeta),
\]
as claimed.
\end{proof}

\subsection*{Acknowledgement}
\par Throughout the course of this work, S.T. has been supported by an NSF GRFP fellowship and by NSF DMS-2037851. We would like to thank our advisor, Larry Guth, for introducing us to Kakeya and related problems and supporting us through the research. Also, we would like to thank Dominique Maldague for her discussion about the initial formulation of the problem and her comments on an earlier draft, and the anonymous referee(s) for their many concrete suggestions.

\bibliographystyle{amsalpha}
\bibliography{9Biblio}

\end{document}